\begin{document}

\newtheorem{theorem}{Theorem}
\newtheorem*{theorem*}{Theorem}
\newtheorem{conjecture}[theorem]{Conjecture}
\newtheorem*{conjecture*}{Conjecture}
\newtheorem{proposition}[theorem]{Proposition}
\newtheorem{question}[theorem]{Question}
\newtheorem{lemma}[theorem]{Lemma}
\newtheorem*{lemma*}{Lemma}
\newtheorem{cor}[theorem]{Corollary}
\newtheorem*{obs*}{Observation}
\newtheorem{condition}{Condition}
\newtheorem{definition}{Definition}
\newtheorem{proc}[theorem]{Procedure}
\newcommand{\comments}[1]{} 
\def\Z{\mathbb Z}
\def\Za{\mathbb Z^\ast}
\def\Fq{{\mathbb F}_q}
\def\R{\mathbb R}
\def\N{\mathbb N}
\def\C{\mathbb C}
\def\k{\kappa}
\def\grad{\nabla}

\newcommand{\todo}[1]{\textbf{\textcolor{red}{[To Do: #1]}}}

\title[Drift Laplacians with bounded drift]{Eigenvalue estimates without Bakry-Emery-Ricci bounds}
 \author[Iowa State University]{Gabriel Khan} 
 
\email{gkhan@iastate.edu}

\date{\today}

\maketitle

\begin{abstract}
We establish a lower bound for the real eigenvalues of a Laplace-Beltrami operator with an $L^\infty$-drift term. We make no assumptions that the operator is self-adjoint or that the drift has any additional regularity. In the case where the operator is self-adjoint, this establishes a lower bound on the spectrum without assuming a lower bound for the Bakry-Emery Ricci tensor. Put colloquially, this result states that no matter which way the wind blows, heat will diffuse at a definite rate depending only on the geometry of the underlying space and the maximal wind speed.
\end{abstract}

\section{Introduction}

This paper studies the spectrum of Laplace-Beltrami operators which are deformed by a bounded drift term. Our main result is to establish the following estimate on the real eigenvalues.

\begin{theorem} \label{Main theorem}
Let $(M^n, g)$ be a compact Riemannian manifold and $\Omega$ a smooth domain in $M$ (or possibly all of $M$). Suppose that $v$ is a one-form with $\| v \|_{L^\infty} < C$ and that there exists $\lambda$ real and $u \in W^{2,p} (\Omega)$ satisfying 
 \begin{equation} \label{Drifted Laplacian}
  \begin{cases} \Delta u+ v (\nabla u) = \lambda u  & x \in \Omega \\
   u(x) \equiv 0 &   x \in \partial \Omega
   \end{cases}
   \end{equation}
   
   Then there exists a constant $\delta>0$ depending only on $\| Ric \|,$  $C,~ diam(M), ~inj(M)$ and $n$ so that $\lambda > \delta$. Here, $\| Ric \|,$ is the norm of the Ricci tensor, $diam(M)$ is the diameter of $M$ and $inj(M)$ is the injectivity radius of $M$.
\end{theorem}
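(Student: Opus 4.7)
The naive $L^2$ energy method, obtained by multiplying \eqref{Drifted Laplacian} by $u$ and integrating by parts using $u|_{\partial\Omega}=0$, gives
\[
\lambda\|u\|_{L^2}^2 = \|\nabla u\|_{L^2}^2 + \int_\Omega u\, v(\nabla u)\, dV \geq -\tfrac{C^2}{4}\|u\|_{L^2}^2
\]
after Cauchy-Schwarz and AM-GM applied to the drift term, which yields only $\lambda \geq -C^2/4$ and falls well short of the claim. I therefore plan to reduce to the principal eigenvalue and estimate it probabilistically.

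\textbf{Reduction to the principal eigenvalue.} The operator $L := \Delta + v\cdot\nabla$ is uniformly elliptic with real $L^\infty$ coefficients, so the Berestycki--Nirenberg--Varadhan extension of Krein--Rutman furnishes a simple real principal Dirichlet eigenvalue $\lambda_1$ associated to a positive eigenfunction $\phi_1\in W^{2,p}(\Omega)$, with the property that every eigenvalue $\mu$ of $L$ satisfies $\mathrm{Re}(\mu)\geq \lambda_1$. In particular any real $\lambda$ satisfying \eqref{Drifted Laplacian} obeys $\lambda\geq \lambda_1$, so it suffices to prove $\lambda_1\geq\delta$ with $\delta$ depending only on the listed quantities.

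\textbf{Probabilistic estimate.} By Feynman--Kac, the Dirichlet semigroup $e^{-tL}$ admits the representation $e^{-tL}f(x) = \mathbf{E}_x[f(Y_t)\mathbf{1}_{\{t<\tau\}}]$, where $Y_t$ is the $M$-valued diffusion with generator $-L$ and $\tau$ its first exit time from $\Omega$. Applied to $\phi_1$, this yields the Donsker--Varadhan identity
\[
\lambda_1 = -\lim_{t\to\infty}\tfrac{1}{t}\log\sup_{x\in\Omega}\mathbf{P}_x(\tau>t),
\]
reducing the problem to a uniform exponential tail bound for the exit time. I would then apply Girsanov's theorem to compare $Y_t$ with the standard Brownian motion $X_t$ on $(M,g)$. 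Since $\|v\|_{L^\infty}<C$, the Radon--Nikodym derivative $Z_t$ satisfies $\mathbf{E}^X[Z_t^2]\leq e^{C^2 t}$, and Cauchy--Schwarz then gives
\[
\mathbf{P}_x^Y(\tau>t) \leq e^{C^2 t/2}\sqrt{\mathbf{P}_x^X(\tau>t)}.
\]
Li--Yau-type heat kernel estimates for $X_t$, whose constants depend on exactly $n$, $\|Ric\|$, $diam(M)$ and $inj(M)$, provide the required exponential decay for the un-drifted motion.

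\textbf{Main obstacle.} The chief difficulty is the Girsanov inflation factor $e^{C^2 t/2}$: a one-shot comparison yields a positive rate only when the un-drifted exit tail decays faster than $e^{-C^2 t}$, which need not hold when $C$ is large. To obtain a $\delta>0$ depending only on the listed quantities I expect one must iterate the Girsanov comparison over a carefully chosen short time $T$, using short-time heat-kernel lower bounds from the local Euclidean-like geometry of $(M,g)$ (controlled by $inj(M)$ and $\|Ric\|$) to ensure that the short-time exit probability dominates the short-time Girsanov cost, and then bootstrap via the Markov property to exponential decay on every timescale. Calibrating $T$ against the drift bound $C$ so as to extract a quantitative $\delta$ is, in my view, the technical heart of the argument.
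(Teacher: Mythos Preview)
Your reduction to the principal eigenvalue is fine, and the Donsker--Varadhan/Feynman--Kac formulation is legitimate, but the Girsanov step is a genuine obstruction, not just a technicality, and the short-time iteration you propose does not overcome it.

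Concretely: the Markov-property iteration gives $\sup_x\mathbf{P}^Y_x(\tau>kT)\le q^k$ provided $q:=\sup_x\mathbf{P}^Y_x(\tau>T)<1$, and by your Girsanov bound this needs $\sup_x\mathbf{P}^X_x(\tau>T)<e^{-C^2T}$. For large $T$ the left side behaves like $e^{-\lambda_1^0 T}$ where $\lambda_1^0$ is the \emph{undrifted} Dirichlet eigenvalue of $\Omega$; you win only if $\lambda_1^0>C^2$, which you cannot assume. For small $T$ the situation is worse: if $x$ is at inradius $r$ from $\partial\Omega$ then $\mathbf{P}^X_x(\tau\le T)$ is of order $e^{-c r^2/T}$, exponentially small, while the Girsanov budget $1-e^{-C^2T}$ is only $O(T)$. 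So no choice of $T$ makes the inequality hold once $C$ is large compared to the geometry. The paper's one-dimensional example on $[-1,1]$ makes this quantitative: there $\lambda_1^0=\pi^2/4$ is fixed but the minimal drifted eigenvalue is asymptotically $C^2e^{-C}$; a Cauchy--Schwarz Girsanov comparison can never extract a rate that is exponentially small in $C$ out of a fixed undrifted rate.

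What the paper does instead is PDE-based and uses structure that your comparison discards. The key move (due to Hamel--Nadirashvili--Russ) is to pass to the drift that \emph{minimizes} $\lambda_1$ over $\|v\|_\infty\le C$; one shows by a touching argument that the minimizing eigenfunction then solves the semilinear equation $\Delta u+C|\nabla u|+\lambda u=0$. This buys a posteriori $C^{3,\alpha}$ regularity of $u$ away from $\{\nabla u=0\}$. A Calder\'on--Zygmund/Morrey argument on harmonic charts (this is where $\|Ric\|$ and $inj(M)$ enter) gives a priori $C^{1,\alpha}$ control, and then a Li--Yau maximum-principle computation applied to $F=\dfrac{|\nabla u|^2}{(\beta-u)^2}(r_{1/2}^2-\rho^2)$, followed by integration along a geodesic and a choice of the auxiliary parameter $\beta$, produces the explicit lower bound on $\lambda$. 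The point is that Li--Yau is run on the drifted equation itself, not on a comparison process, so the drift is absorbed into the Bochner computation rather than into an exponential weight.
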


This estimate immediately implies a lower bound of the principal eigenvalue of drift-Laplacians on smooth bounded domains in Riemannian manifolds.

\begin{cor}
Let $(M^n, g)$ be a compact Riemannian manifold and $\Omega \subset M$ be a smooth domain with non-empty boundary. Let $v$ be some a one-form on $\Omega$ satisfying $\| v \|_\infty < C$. Consider the principle eigenvalue $\lambda_1$ of the operator $\Delta+ v( \nabla \cdot)$ on $\Omega$.
 Then there exists some constant $\delta>0$ depending only on $\| Ric \|,$  $C,~ diam(M),~ inj(M)$ and $n$ so that $\lambda > \delta$.
\end{cor}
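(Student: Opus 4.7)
The plan is to deduce the Corollary as an immediate consequence of Theorem \ref{Main theorem}; essentially all that is required is to exhibit a principal eigenpair with the regularity already assumed there.

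First I would produce the principal eigenvalue itself. The operator $L = \Delta + v(\nabla \cdot)$ on $\Omega$ with zero Dirichlet boundary data is uniformly elliptic with bounded (though only $L^\infty$) lower-order coefficients, which is enough to run the standard Krein-Rutman argument on the compact positive resolvent in the non-self-adjoint framework of Berestycki-Nirenberg-Varadhan. This yields a real principal eigenvalue $\lambda_1$ together with a positive eigenfunction $u_1$ vanishing on $\partial \Omega$. (This is, morally, the smallest real eigenvalue, which is all that is needed for the Corollary.)

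Next I would upgrade the regularity of $u_1$. Because $\partial \Omega$ is smooth and $v \in L^\infty$, the Calder\'on-Zygmund theory for nondivergence-form uniformly elliptic operators with bounded measurable drift, together with the Agmon-Douglis-Nirenberg boundary estimates, places $u_1 \in W^{2,p}(\Omega)$ for every finite $p$. At that point the pair $(\lambda_1, u_1)$ satisfies the hypotheses of Theorem \ref{Main theorem} verbatim, and applying that theorem gives $\lambda_1 > \delta$ with exactly the same constant $\delta = \delta(\|\mathrm{Ric}\|, C, \mathrm{diam}(M), \mathrm{inj}(M), n)$.

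I do not anticipate any genuine obstacle. The only step with content is ensuring existence of a $W^{2,p}$-regular principal eigenpair for a non-self-adjoint operator whose drift is merely $L^\infty$, and this is classical. All the geometric work — which is where the real difficulty sits — is already packaged inside Theorem \ref{Main theorem}, so the Corollary is essentially a reformulation of it in the language of principal eigenvalues.
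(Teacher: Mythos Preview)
Your proposal is correct and matches the paper's approach: the paper states that the Corollary follows ``immediately'' from Theorem~\ref{Main theorem}, invoking Berestycki--Nirenberg--Varadhan \cite{BNV} for the existence of the principal eigenpair, and you have simply spelled out the details of that implication. The regularity step you include is also consistent with how the paper treats eigenfunctions throughout.
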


When $v= d\varphi$ for some potential function $\varphi$, the operator $\Delta + v(\nabla \cdot)$ is self-adjoint, and Theorem \ref{Main theorem} gives lower bounds on the spectrum of in terms of bounds on the Ricci curvature and the Lipschitz constant for the potential, but does not require any assumptions on the Bakry-Emery Ricci tensor.

The basic strategy of the proof is to adapt the Li-Yau estimate \cite{LY} to the drifted Laplacian. However, the lack of a priori bound on $\nabla v$ prevents us from directly using this technique. The key insight around this roadblock is an ansatz due to Hamel, Nadirashvili, and Russ \cite{HNR} which shows that when the principle eigenvalue is minimized, the problem becomes much more regular. Intuitively, to slow diffusion as much as possible, all of the drift needs to be working in unison.

To give a informal analogy, a fireplace will do a poor job heating a cold room if you pump air towards the fireplace and the hot air escapes up a chimney. However, in this set-up, the airflow will be continuous away from the fireplace. The a posteriori regularity on the air current makes it possible to prove a gradient estimate for the temperature to show that the room still warms at a definite rate. Although this idea is straightforward conceptually, formalizing it requires some effort.

\subsection{Acknowledgments}

The author would like to thank Fangyang Zheng for his mentorship and Adrian Lam for his help with the analytic aspects of this paper. Thanks also to Kori Khan for her help editing. This work was partially supported by DARPA/ARO Grant W911NF-16-1-0383 (PI: Jun Zhang, University of Michigan). This manuscript is the part of the author’s Ph.D. dissertation. A previous version of this paper was posted on the Arxiv with the title ``On the spectrum of $L^\infty$-drifted Laplace-Beltrami operators."

\section{Background}

The study of eigenvalues on domains and manifold has a long and rich history. Classically, this is related to the problem of ``hearing the shape of a drum" \cite{Kac}, which asks whether the geometry of a space is uniquely determined by the spectrum of its Laplacian. The answer to this question is negative \cite{milnor1964eigenvalues}, but the spectrum provides rich geometric information which has applications in many mathematical fields. For an introduction, we highly recommend the lecture notes by Canzani \cite{YC} and for some applications and interesting connections, we recommend the book by Rosenberg \cite{Rosenberg}.

Standard elliptic theory shows that for a bounded open domain or a compact manifold, there is a principle eigenvalue which is positive and defines the bottom of the spectrum. However, obtaining (non-zero) lower bounds for this eigenvalue is a difficult problem, even for Riemann surfaces. For instance, it is still an open question whether there exists hyperbolic surfaces of arbitrarily large genus whose principle eigenvalue is close to $\frac{1}{4}$. One striking result of Mirzakhani shows that for ``most" hyperbolic surfaces, the principle eigenvalue is greater than $2 \times 10^{-3}$. More precisely, the probability of the principle eigenvalue of a random hyperbolic surface being smaller than this goes to zero as the genus gets large \cite{mirzakhani2013growth}. This work uses some deep analysis of the geodesics of a Weil-Petersson random surface to bound the Cheeger constant \cite{cheeger1969lower} from below.

For more general Riemannian manifolds, estimating the Cheeger constant is not feasible, so the main strategy to find lower bounds on the spectrum is to use the Li-Yau estimate \cite{LY}. The original paper of Li and Yau studied the heat equation associated to the Laplace-Beltrami operator and derived a gradient estimate for solutions to the heat equation.
It it possible to adapt this estimate to obtain gradient bounds for eigenfunctions of the Laplace-Beltrami operator, which provides a lower bound for $\lambda$. This estimate involves a lower bound on the Ricci curvature, the diameter, and the dimension of the manifold. There has been a concerted effort to sharpen these estimates to find tighter bounds (see, e.g., \cite{ZY}). Beyond eigenvalue estimates, the Li-Yau estimate has played a central role in the development of geometric analysis (most famously, it has an important role in the analysis of Ricci flow \cite{Perelman}).

Our focus is on Laplacians with a lower order drift-term, which can be thought of as a ``convection" term. Historically, drift-Laplacians have primarily been studied when the operator is self-adjoint or when the drift is divergence-free. The study of self-adjoint drift-Laplacians play a central role in the analysis of metric-measure spaces and diffusion processes (see, e.g. \cite{lott2003some,lott2009ricci,wei2009comparison}) and is important in the analysis of Ricci solitons. Furthermore, these operators are unitarily equivalent to Witten Laplacians, which were studied by Witten \cite{EW} in his work on Morse theory. The problem of understanding the spectrum of self-adjoint drift Laplacians has been studied in depth, and sharp eigenvalue bounds are known  (see, e.g., \cite{andrews2012eigenvalue,charalambous2015eigenvalue,colbois2015eigenvalues,futaki2013first}) under the assumption of a lower bound on the Bakry-Emery Ricci tensor \cite{BakryEmery}, which is defined as $Ric + \nabla^2 \varphi$ where $v = d \varphi$.


If the drift is instead assumed to be divergence-free, there is a famous result of Nash which shows that the fundamental solution of the heat kernel decays at least as quickly as the kernel of the heat equation \cite{nash1958continuity}. In other words, stirring an incompressible fluid tends to accelerate diffusion and cannot slow it down.

For general elliptic equations with rough coefficients and non-smooth domains, much of the progress on eigenvalue estimates uses sub-solutions and other techniques from partial differential equation. For instance, given an elliptic operator $L$, the work of Berestycki et. al \cite{BNV} defines the principal eigenvalue and provides positive lower bounds on the it given sub-solutions to the problem $L u \leq 0$ (and various other hypothesis). For domains in manifolds, it is generally not possible to find sub-solutions explicitly, so we are forced to estimate the eigenvalues using the coefficients and the geometry of the domain alone. However, Berestycki et. al \cite{BNV} plays an essential role in our analysis. For instance, it allows us to define the principle eigenvalue of a uniformly elliptic operator $L$:
\begin{equation}
\lambda_1 = \sup \{ \lambda ~|~ \exists u >0 \textrm{ in $\Omega$ satisfying } (L+ \lambda) u \leq 0 \}
\end{equation}
The principal eigenvalue is well-defined for a very general class of elliptic operators and in some sense provides the bottom of the spectrum for the operator. 

For domains in Euclidean space with $L^\infty$-drifts, two papers by Hamel, Nadirashvili, and Russ \cite{HNR} \cite{HNR2} provided a key breakthrough in the analysis of the principle eigenvalue. More specifically, they proved a version of the Faber-Krahn inequality for a drifted Laplacian when the drift is bounded. To show this, they start by making the key observation that when the eigenvalue is minimized, the drift takes a special form which produces much more regularity for free. This idea plays a central role in our argument, and allows the Li-Yau estimate to be used. We should note that this is not the first paper to use the Hamel-Nadirashvili-Russ heuristic to find eigenvalue estimates on Riemannian manifolds. Recently, Ferreira and Salavessa \cite{FS} used these ideas to compare the eigenvalues of $V$-Laplacians on geodesic balls to those on model spaces. Our two approaches are completely different, but the results have some overlap. In particular, Theorems 1 and 2 of their paper proves a lower bounds of the principle eigenvalue on geodesic balls.

Interesting, this work and their work both have relative advantages. Ferreira and Salavessa establish Faber-Krahn type inequalities in geodesic balls, which we are unable to do. Also, under the assumption of bounded radial sectional curvature, they are able to relax the assumption on the drift. However, our work makes no assumption that $\Omega$ is diffeomorphic to an open set in Euclidean space and only requires bounds on the Ricci curvature instead of the sectional curvature\footnote{ Theorem 2 of Ferreira-Salavessa uses the Ricci curvature but assumes the drift is radial in a geodesic ball.}. It would be of interest to determine whether it is possible to synthesize these two approaches to establish stronger results, but we will not consider that in this paper.

We only consider the real and non-zero elements of the spectrum. This set is guaranteed to be non-empty in two important cases.

\begin{enumerate} 
\item If $\Omega$ is an open subset of a compact manifold whose boundary is smooth and non-empty, then there will be a real and positive principal eigenvalue for the Dirichlet problem on $\Omega$ \cite{BNV}. Our work gives lower bounds on this eigenvalue without making any further assumptions on the boundary of $\Omega$ or assuming that we can construct a sub-solution.

\item If $v = d \varphi$ for some function $\varphi$, then the drift-Laplacian is self-adjoint and its spectrum is entirely real. The assumption that $v$ is bounded is equivalent to the assumption that $\varphi$ is uniformly Lipschitz. For now, we must assume a bound on the norm of the Ricci tensor (i.e. a two-sided Ricci curvature bound). However, this bound is only used to be able to apply the Calderon-Zygmund inequality on our manifold. If it is possible to avoid this step, then the only curvature input needed for the estimate is a lower bound on the Ricci curvature.
\end{enumerate}

\section{The proof of Theorem 1}

In this section, we provide the proof to Theorem \ref{Main theorem}. 
We start with a brief overview of the proof. In the following subsections, we will then fill in the detailed argument.



\begin{enumerate}

\item We start by making use of a Calderon-Zygmund inequality for manifolds which was proven by G\"uneysu and Pigola \cite{GP} to derive a $W^{2,p}$ estimate on $u$ satisfying \eqref{Drifted Laplacian}. For this, we need a bound on the norm of the Ricci tensor and some lower-order geometry. This is the only step of the proof where we make use of the Ricci curvature upper bound.


We then use the Ricci curvature, injectivity radius and volume to find an atlas on $M$ with bounded $C^\alpha$ harmonic radius \cite{AC}. Using a partition of unity subordinate to this atlas, we obtain a $C^{1,\alpha}$ estimate on $u$ using Morrey's inequality on each chart.
In spirit, this is similar to Theorem 7.1 of \cite{Coulhon}, but for compact manifolds.
This part of the argument is general, and not specialized to the problem at hand. However, the estimates must be done carefully so that they depend on the geometry in a minimal way.

\item We consider the domain on which the function $u$ is positive. We expand this domain if need be so that the boundary is smooth and consider a sequence of drifts that minimize the principle eigenvalue $\lambda$ on that domain. We pick some subsequence for which the associated drifts and the corresponding eigenfunctions converges in some weak sense. When the drift minimizes $\lambda$, we find that the minimizing function satisfies the semi-linear equation
 \begin{equation}
     \Delta u + C | \nabla u |  + \lambda u= 0
 \end{equation}  
  with Dirichlet conditions. This phenomena was first observed by Hamel, Nadirashvili and Russ \cite{HNR} and essentially provides $C^1$ control over the drift away from the zero locus of $u$ and $\nabla u$.

\item We then use standard elliptic theory (Theorem 6.2 of Gilbarg-Trudinger \cite{GT} and our $W^{2, p}$ estimate on $u$) to bootstrap the regularity of $u$ to $C^{3, \alpha}$ in a small neighborhoods away from the zero locus of $\nabla u$ and $u$. We refer to these estimates as the Schauder bounds. Since these bounds are heavily dependent on the neighborhood we are working in, we cannot incorporate them into our estimate of $\lambda$ (if we try to do so, the argument becomes circular). This step should be understood as a qualitative $C^3$ estimate, but not a quantitative estimate. With this $C^3$ estimate, we are able to use the Bochner technique. 

\item We consider the point $x_0 \in M$ which maximizes \begin{equation}
     F_\beta(x)= \dfrac{|\nabla  (u)|^2}{(\beta- u)^2} \psi(x)
\end{equation} where $\psi$ is a suitably chosen cut-off function. We then use a Li-Yau-type estimate to obtain an upper bound for $F_\beta(x)$. This part of the argument is mostly an adaptation of the original Li-Yau estimate for the Laplace-Beltrami operator. However, it involves a lengthy calculation.
 
  \item The Li-Yau estimate provides a bound on $|\nabla  u|$, which we integrate along a particular geodesic to obtain a Harnack estimate. Using the geometry of the manifold and the magnitude of the drift, we choose the parameter $\beta$ appropriately to obtain a lower bound on $\lambda$. 

\end{enumerate}

Before we move on to the detailed argument, let us make several remarks. First, under weaker assumptions on the drift, it is possible to show that $\lambda > 0$ using a basic maximum principle argument (see the proof of Theorem 6 in \cite{GK}), but this does not provide quantitative lower bounds, which is the goal of our argument.

Second, the bounds that we obtain will be semi-explicit. In other words, we will provide a closed form expression, but it will rely on constants which were derived in the work of G\"uneysu and Pigola \cite{GP} and Anderson and Cheeger \cite{AC}. Once those constants are known, our bound is explicit and given in closed form.

 \subsection{The $C^{1,\alpha}$ estimate}
 We start by establishing an a priori $C^{1,\alpha}$ estimate on $u$ that depends only on the Ricci curvature and the lower order geometry of $M$. To do so, we apply a Calderon-Zygmund estimate proved in the recent work of G\"uneysu and Pigola \cite{GP}.
 
 \begin{theorem*}[\cite{GP}]
  Let $1< p < \infty$ and assume that $M$ has bounded Ricci curvature and a positive injectivity radius.
 Then, for all $\varphi \in C_c^\infty(M)$,
\begin{equation} \label{Calderon-Zygmund}
  \| \nabla^2 \varphi \|_{L^p} \leq C_1 \| \varphi \|_{L^p} + C_2 \| \Delta \varphi \|_{L^p}, \end{equation}
where the constants depend only on $dim~ M, ~p,~|Ric|$ and the injectivity radius.
\end{theorem*}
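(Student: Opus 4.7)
The plan is to reduce the global assertion to the classical Euclidean Calder\'on--Zygmund inequality by covering $M$ with a controlled atlas of harmonic coordinate charts. The hypotheses -- two-sided Ricci bounds together with a positive injectivity radius -- are exactly the Anderson--Cheeger conditions that produce a uniform harmonic radius $r_0>0$, depending only on $\dim M$, $\|Ric\|$ and $\mathrm{inj}(M)$, such that on every geodesic ball $B(x,r_0)$ there exist harmonic coordinates in which the metric components $g_{ij}$ are uniformly elliptic and uniformly bounded in $C^{1,\alpha}$. The crucial feature of harmonic coordinates is that the Laplace--Beltrami operator assumes the pure second-order form $\Delta = g^{ij}(x)\,\partial_i\partial_j$, because the defining condition $\Delta x^k=0$ annihilates the Christoffel contribution. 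I would cover $M$ by finitely many such balls $B_\alpha$ with uniformly bounded overlap multiplicity (a Vitali packing at scale $r_0$ combined with volume comparison) and choose a subordinate partition of unity $\{\chi_\alpha\}$ whose first and second derivatives are controlled solely in terms of $r_0$ and the coordinate norm of $g$.

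On each chart, applying the standard Euclidean Calder\'on--Zygmund estimate for uniformly elliptic operators with H\"older continuous leading coefficients to the localization $\chi_\alpha\varphi$ yields
\begin{equation*}
\|\nabla^2(\chi_\alpha \varphi)\|_{L^p} \;\leq\; K\bigl(\|\chi_\alpha \varphi\|_{L^p} + \|\Delta(\chi_\alpha \varphi)\|_{L^p}\bigr),
\end{equation*}
with $K$ depending only on $p$, $n$ and the geometric data. Expanding via the product rule,
\begin{equation*}
\Delta(\chi_\alpha \varphi) \;=\; \chi_\alpha \Delta \varphi \;+\; 2\, g(\nabla \chi_\alpha,\nabla \varphi) \;+\; \varphi\,\Delta \chi_\alpha,
\end{equation*}
and summing over the cover, one arrives at the intermediate estimate
\begin{equation*}
\|\nabla^2 \varphi\|_{L^p(M)} \;\leq\; K'\bigl(\|\varphi\|_{L^p} + \|\nabla \varphi\|_{L^p} + \|\Delta \varphi\|_{L^p}\bigr).
\end{equation*}

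To eliminate the first-order term, I would invoke the standard interpolation inequality $\|\nabla \varphi\|_{L^p} \leq \epsilon \|\nabla^2 \varphi\|_{L^p} + C_\epsilon \|\varphi\|_{L^p}$, which holds on each Euclidean chart and transfers to $M$ by the same partition of unity. Choosing $\epsilon$ small enough to absorb into the left-hand side produces the desired two-term bound.

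The main obstacle is the uniformity of all constants. The Euclidean Calder\'on--Zygmund constant depends quantitatively on the ellipticity and on the modulus of continuity of the leading coefficients, while the patching procedure introduces weights involving derivatives of the $\chi_\alpha$ and the overlap multiplicity. Making every one of these depend only on $\dim M$, $p$, $\|Ric\|$ and $\mathrm{inj}(M)$ -- and not on any hidden global quantity -- is precisely what the Anderson--Cheeger harmonic radius theorem supplies. The more delicate sub-point is that $C^{1,\alpha}$ regularity of $g$ in harmonic coordinates under merely two-sided Ricci bounds is itself a nontrivial elliptic regularity argument (the harmonic gauge transforms the Ricci equation into a semilinear elliptic system for $g_{ij}$), and this is the step I expect to require the most care if one were to make the constants genuinely explicit.
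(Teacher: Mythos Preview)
Your proposal is correct and follows essentially the same route as the paper's sketch (which in turn summarizes \cite{GP}): bound the $C^{1,\alpha}$ harmonic radius via Anderson--Cheeger, cover by balls of controlled multiplicity, apply the local Euclidean Calder\'on--Zygmund estimate in each chart, and then remove the gradient term by interpolation. The paper cites the relevant pieces of \cite{GP} by number (Lemma 4.8 for the bounded-multiplicity cover, Theorem 3.16 for the local $W^{2,p}$ estimate, Proposition 3.12a for the interpolation), but the underlying argument is exactly the one you describe, including your observation that in harmonic coordinates the Laplace--Beltrami operator has no first-order part.
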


G\"uneysu and Pigola's work proves this estimate in the non-compact case, but it is straightforward to adapt their result to the compact case. To do so, one uses the bound on the injectivity radius and Ricci tensor to obtain a lower bound on the $C^{1,\alpha}$ harmonic radius of precision 2 (see the appendix of \cite{GP}). From this, one can take a cover of $M$ by balls of half this radius and apply Lemma 4.8 to find a finite cover whose intersection multiplicity is bounded. In each chart, applying Theorem 3.16 obtains a $W^{2,p}$ estimate and the bounded intersection multiplicity allows one to use these local estimates to obtain a global $W^{2,p}$ estimate. After this and applying Proposition 3.12a to eliminate the gradient term, one has the desired result.

It is worth noting that with a lower bound on the Ricci tensor as well as bounds on the volume and injectivity radius, there is a lower bound on the $C^\alpha$ harmonic radius as well as bounds on the number of charts and their multiplicity \cite{AC}. To estimate the symbol of the Laplace-Beltrami operator in a coordinate chart, one needs an estimate of the following form:
\begin{equation}
     Q^{-1} \delta_{ij} \leq g_{ij} \leq Q \delta_{ij}
\end{equation}

Such an estimate is guaranteed within the $C^\alpha$ harmonic radius of precision $Q$. Therefore, it seems likely that one can derive a similar estimate with only a lower bound on the Ricci tensor. However, the main technical obstruction to this approach is that with only $C^\alpha$ control of $g$ in the coordinate charts, we do not have control of the lower order terms in the Laplace-Beltrami operator. As such, we use a two sided bound on the Ricci tensor, which gives bounds on the $C^{1,\alpha}$ harmonic radii.

  We are trying to find lower bounds on $\lambda$, so we assume that $\lambda < 1$ (if not, then $1$ is trivially a lower bound).
Taking $\varphi$ to be the function $u$ in \eqref{Drifted Laplacian}, Inequality \eqref{Calderon-Zygmund} implies the following estimate.
 \begin{eqnarray*}
  \| \nabla^2 (u) \|_{L^p} & \leq & C_1 \| u \|_{L^p} +  C_2 \| \Delta (u) \|_{L^p}  \\
  					& =  &  C_1 \| u \|_{L^p} +  C_2 \| v(\nabla u) +\lambda u \|_{L^p}  \\
					& \leq & C_1 2 \| u \|_{L^p} +  C_2 \cdot C \| (\nabla u)\|_{L^p} 
 \end{eqnarray*}

To eliminate the gradient term, we once again use Proposition 3.12a of \cite{GP}. Doing so, we find that
\[  \| \nabla^2 (u) \|_{L^p} \leq C_3 \|u \|_{L^p}. \]

Normalizing $u$ so that $\sup u = 1$, we can use the volume comparison theorem along with our Ricci and diameter estimate to get a uniform estimate on the $L^p$ norm of $u$. From this, we obtain a uniform $W^{2,p}$ estimate on $u$ that depends only on $p$, $n$, diameter, the injectivity radius, and the bounds on the Ricci curvature. This bound provides a uniform $L^p$ estimate on $\nabla |\nabla u|$. To eliminate the dependence on $p$, we set $p=2n$ (this choice is arbitrary).

We use the results of Anderson and Cheeger \cite{AC} to cover $M$ with a finite atlas of precision 2 $C^{1/2}$ harmonic coordinate charts \begin{equation}
    \phi_i : U_i \to B_{r_h} (0) \subset \mathbb{R}^n.
\end{equation} In each of these charts, we can use the precision estimates to obtain a $W^{2,p}$ bound on $u \circ \phi_i^{-1}$. From this, we can use Morrey's inequality on each ball to obtain a uniform $C^{1,\alpha}$ bound on $u \circ \phi_i^{-1}$. Using the precision estimates again, we obtain a uniform $C^{1,\alpha}$ estimate on $u$.
Therefore, for some $C_4(n,\alpha, | Ric |, diam(M), inj(M))$, we have the estimate
\[ \| u \|_{C^{1,\alpha}} < C_4. \]
 

 It is worth mentioning that we could have derived the Morrey-type estimate in the atlas of $C^{1,\alpha}$ 2-precise harmonic coordinate charts that were used to prove the Calderon-Zygmund estimate. However, we chose $C^\alpha$ 2-precise charts so that the estimate would only depend on a lower bound of the Ricci curvature. The reason to do this is that the only place the Ricci upper bound is needed is the a priori $W^{2,p}$ estimate. If we can find a way to establish this in a different way, the result will not rely on a Ricci upper bound.

 \subsection{Finding the drift that minimizes the principle eigenvalue}

When $\Omega$ is a closed compact manifold, we want to reduce our problem to a Dirichlet problem on a subdomain. To do so, consider the open manifold $M^+ = \{M ~|~ u >0 \}$. Note that we can show that this domain contains a uniform ball, by the $W^{2,p}_{loc}$ estimate on $u$. We can also show that its complement also contains an open ball. However, we do not have any a priori regularity of the boundary of $M^+$. Therefore, we instead consider the domain $M^+_\epsilon$ so that $M^+ \subset M^+_\epsilon$ and the boundary of $M^+_\epsilon$ is smooth. Heuristically, one should picture $M^+_\epsilon$ as being only slightly enlarged from $M^+$, but we will not need to use this explicitly. If we instead work with the Dirichlet problem on a smooth bounded domain, we can set $\Omega = M^+ = M^+_\epsilon$, and this step is unnecessary. 

At this point, we have reduced the original problem to studying the Dirichlet problem on a smooth open set in $M^+_\epsilon \subset M$. We now consider the drift $v^\prime$ which minimizes the principle eigenvalue $\lambda(\Delta u + v, M^+_\epsilon)$ among all drifts $v$ with $\| v \|_\infty < C$. Since $M^+_\epsilon$ is at least as large as $M^+$, $\lambda(\Delta u + v^\prime, M^+_\epsilon)$ is no greater than $\lambda(\Delta + v(\nabla \cdot), M^+)$. Therefore, it suffices for us to estimate $\lambda(\Delta u + v^\prime, M^+_\epsilon)$.

 We now consider the minimal principle eigenvalue $\lambda = \lambda(\Delta u + v^\prime, M^+_\epsilon)$ and its associated eigenfunction $u$, and prove that they satisfy the Dirichlet problem for the following semi-linear equation on $M^+_\epsilon$:
 \begin{equation} \label{Optimal drift}
 \Delta u + C | \nabla u| + \lambda u = 0
  \end{equation} 
 
 To do this, we assume that $v^\prime \neq C \frac{\nabla u}{|\nabla u|}$ on some subset of $M^+_\epsilon$ with non-zero measure. This implies that $u$ is a sub-solution to the following problem: 
 \begin{equation}
\Delta u - C\frac{\nabla u}{|\nabla u|} \cdot \nabla u + \lambda(\Delta u + v^\prime, M^+_\epsilon)   \leq \Delta u - v^\prime( \nabla u) \lambda u + \lambda(\Delta u + v^\prime, M^+_\epsilon) = 0      
 \end{equation} 

 Now, since $v^\prime$ is $L^\infty$ and $M^+_\epsilon$ is smooth, we have a local $W^{2,p}$ estimate on $u$, and hence $\nabla u$ is well defined. As such, $C\frac{\nabla u}{|\nabla u|}$ is $L^\infty$ and there exists a $W^{2,p}_{loc}$ solution to the Dirichlet problem.
 \begin{equation} \label{semi-linear equation} 0 = u^\prime - C\frac{\nabla u}{|\nabla u|} \cdot \nabla u^\prime + \lambda^\prime u^\prime \end{equation}
 
Since we assumed that $v^\prime$ minimizes $\lambda$, we know that $\lambda \leq  \lambda^\prime$ which implies that $u^\prime$ is a super-solution to the following problem:
\[ 0 \leq u^\prime - C\frac{\nabla u}{|\nabla u|} \cdot \nabla u^\prime + \lambda u^\prime \]

 Since $M^+_\epsilon$ is smooth and the drift is $L^\infty$, the Hopf lemma holds and shows that $\nabla u \neq 0$ on the boundary. From this, if we consider $u-\kappa u^\prime$, and choose $\kappa$ so that it is the maximum such $\kappa$ for which $u-\kappa u^\prime \geq 0$. From this, we can use a standard touching argument and either the maximum principle or the Hopf lemma to show that $u \equiv \kappa u^\prime$. In fact, this is exactly Lemma 2.1 of Hamel et al. \cite{HNR}, applied to an open domain on a manifold. As such, we have proven the ansatz.

This observation should be somewhat surprising. It shows that in the worst case scenario, where all the drift is working to make the principle eigenvalue as small as possible, we end up with much stronger regularity than we initially assumed. This gives us very strong control of the drift away from the zero locus of $u$ and $\nabla u$. In essence, all the drift is working together and cannot be too irregular. This phenomena was first observed in \cite{HNR}, which considered the drift-Laplacian with Dirichlet boundary conditions on $C^{2,\alpha}$ open domains in $\R^n$ and proved a version of the Faber-Krahn inequality.

\subsection{The a posteriori $C^\alpha$ drift estimate and uniform radii estimates}
\label{A posteriori drift estimates}

We now use our a priori regularity to ensure that the function $u$ does not vanish too quickly because we do not have $C^1$ control of the drift on the zero locus of $u$. To do this, we can use our a priori $C^{1,\alpha}$ estimate.

\subsubsection{Lipschitz estimates}

Define $p \in M$ to be an argmax of $u$ (i.e. satisfy $u(p)=1$). We define the c-radius $r_c$ as $\inf_x (d(x,p)~|~ u(x)= c,~ u(p)=1)$. For shorthand, we denote $d_{1-c} := \frac{1-c}{C_4}$. By the $C^{1,\alpha}$ estimate on $u$, we have $r_{c} > d_{1-c}$.

Intuitively, $d_c$ is the smallest distance we can travel to find an oscillation of $c$. This estimate only depends on the geometry of the manifold. Therefore, we can use the constant $d_c$ throughout the estimate. To calculate $d_c$ explicitly, note that we would have had to calculate $C_4$ explicitly.

\subsubsection{Higher regularity away from the zero locus of $\nabla u$}

From the $C^{1, \alpha}$-estimate on $u$, there is trivially a $C^\alpha$ estimate on $|\nabla u|$. Thus, when $|\nabla u|$ is non-zero, we have that $u$ satisfies $ \Delta u + C \frac{\nabla u}{|\nabla u|} \cdot \nabla u - \lambda u = 0$. The coefficients are now $C^\alpha$, so we gain $C^{2, \alpha}$ control on $u$ away from where $|\nabla u| =0$ by Schauder theory. Therefore, $|\nabla u| \in C^{1, \alpha}$ in this neighborhood and hence using the Schauder interior estimates again, we have that $u \in C^{3, \alpha}$ in a possible smaller neighborhood.

Away from the zero locus of $u$ and $\nabla u$, this bound allows us to take three derivatives of $u$, which is necessary to use Bochner's formula.
However, this estimate cannot be done uniformly as $u$ approaches $1$, and so we cannot use these bounds in our estimate of $ \lambda$; doing so makes argument circular when we try to choose the parameter $\beta$.


\subsection{The Li-Yau Estimate}

Now that we have $C^{3,\alpha}$ regularity of $u$ and the regularity of the drift away from a singular locus, we can apply the Li-Yau estimate. This step requires a fairly involved calculation, but the goal is to apply the maximum principle to a suitably chosen function to obtain a gradient estimate on $u$.

Recall that we have a function $u \in W^{2,p}(M^+_\epsilon)$ which satisfies the following:
\begin{equation}\label{eq:ADE}
\begin{cases}
 \Delta u + C|\nabla u| + \lambda u = 0 \\
 u \big \vert_{\partial M^+_\epsilon} \equiv 0
\end{cases}
\end{equation}

Suppose further that we have rescaled $u$ so that $\sup u = 1$ and that $\textrm{argmax}(u) = p$. We define $\rho(x) = dist(p,x)$ and fix a parameter $\beta >1$ to be determined later.

 We now consider the function $F(x)$ defined by: 
 \begin{equation}\label{eq:F} 
F(x)= \dfrac{|\nabla   u|^2}{(\beta-  u)^2} (r_{1/2}^2- \rho^2) 
 \end{equation}

 We observe that there is a point $x \in B_{r_{3/4}}$ with $|\nabla  (u)|> \frac{1}{4d}$ where $d$ is the diameter of $M$. At such a point, \begin{equation}
 \dfrac{|\nabla  (u)|^2}{(\beta- (u))^2} (r_{1/2}^2- \rho^2) > \dfrac{1}{16d^2 (\beta- 3/4)^2} (c r_{3/4})      
 \end{equation}

We consider the point $x_0 \in M$ which maximizes $F(x)$. Our previous estimate shows the following:
\begin{equation} \label{Gradient lower bound}
     |\nabla  (u)|^2> \frac{{(\beta- 1)^2} }{d^2} \dfrac{1}{16d^2 (\beta- 3/4)^2} (c r_{3/4})
\end{equation}
 
 

 Using the a priori $C^{1, \alpha}$ estimate on $u$, Inequality \ref{Gradient lower bound} shows that for fixed $\beta>1$, we can find a small ball $B$ around $x_0$ so that $|\nabla u| \neq 0$ in $B$. As described in Subsection \ref{A posteriori drift estimates}, Schauder theory allows us to bootstrap the regularity of $u$ to $C^{3,\alpha}$ in a small neighborhood around $x_0$. The size of this neighborhood will decay as $\beta$ gets close to 1. However, for a fixed $\beta$, this is enough regularity to apply the maximum principle. 
 
 

It is worthwhile to make some further remarks about this step. The function $F(x)$ incorporates the Lipschitz estimate of $u$ in its cut-off function. However, we do not directly use the a priori $C^\alpha$ continuity of $\nabla u$. That additional regularity is needed here, to ensure that $\nabla u$ does not vanish in a neighborhood of $x_0$ (so that the a posteriori Schauder estimates hold).

\subsubsection{Bochner's formula}

We consider an orthonormal frame around $x_0$. Recall that by our bound on the Ricci curvature, we have that $Ric(M) > -(n-1)K$ for some $K$.
\begin{eqnarray*} 
\Delta (| \nabla u|^2) & =& 2 \sum_{i,j} u_{ij}^2 + 2 \sum_i u_i (\Delta u)_i + 2 Ric( \nabla u, \nabla u) \\
			& =& 2 \sum_{i,j} u_{ij}^2 + 2 \sum_i u_i (-C|\nabla u| - \lambda u)_i + 2 Ric( \nabla u, \nabla u) \\
			&=& 2 \sum_{i,j} u_{ij}^2 + 2 \sum_i u_i (-C|\nabla u|  - \lambda u)_i + 2 Ric( \nabla u, \nabla u) \\
			& \geq& 2 \sum_{i,j} u_{ij}^2 + 2 \sum_i u_i (-C|\nabla u| - \lambda u)_i -(n-1)K| \nabla u|^2 \\
			& =& 2 \sum_{i,j} u_{ij}^2 + 2 \sum_i u_i (-C|\nabla u|)_i -((n-1)K+\lambda)| \nabla u|^2
\end{eqnarray*}

We may choose normal coordinates at $x$ so that $u_1(x_0) = |\nabla u|$, $u_i = 0$ for $i \neq 1$. This choice ensures that $ \nabla_j | \nabla u| = u_{1j}$ and hence $| \nabla ( | \nabla u|)|^2 = \sum_j u^2_{1j}$. We also have the following identity: 
 \[ \Delta (| \nabla u|^2) = 2 |\nabla u| \Delta (| \nabla u|) + 2 | \nabla ( | \nabla u|)|^2. \]

Substituting this equation into the preceding inequality, we find the following.
\begin{eqnarray*} 
 |\nabla u| \Delta (| \nabla u|) & \geq &  \sum_{i,j} u_{ij}^2 - \sum_j u^2_{1j} - 2 \sum_i u_i (C|\nabla u|)_i -((n-1)K+\lambda)| \nabla u|^2  
\end{eqnarray*}

We now estimate the first two terms. 
\begin{eqnarray*} 
 \sum_{i,j} u_{ij}^2 - \sum_j u^2_{1j} 
 & \geq &  \sum_{i>1} u^2_{i1} + \frac{1}{n-1} (\sum_{i>1} u_{ii})^2\\
   & \geq &  \sum_{i>1} u^2_{i1} + \frac{1}{n-1} (-C|\nabla u| - \lambda u - u_{11})^2\\
   & \geq &  \sum_{i>1} u^2_{i1} + \frac{1}{n-1} \left( \frac{u_{11}^2}{2} - 2(C|\nabla u|)^2 - 2(\lambda u)^2 \right)\\
  & \geq & \frac{1}{2(n-1)} | \nabla |\nabla u||^2 - \frac{2}{n-1} \left((C|\nabla u|)^2 + (\lambda u)^2 \right)
\end{eqnarray*}

This implies the following.
\begin{eqnarray*}
 \Delta (| \nabla u|^2) & \geq & \left( 2+ \frac{1}{(n-1)} \right) | \nabla |\nabla u||^2 - 2 \sum_i u_i (C|\nabla u|)_i  \\
 				 &  & -((n-1)K+\lambda)| \nabla u|^2 - \frac{2}{n-1} \left((C|\nabla u|)^2 + (\lambda u)^2 \right)
\end{eqnarray*}

\subsection{An estimate using the maximum principle}

We are now ready to estimate $F(x)$ using the Li-Yau estimate. Recall that we defined $F(x)$ in the following way.
 \[ F(x) = \frac{ | \nabla u|^2}{(\beta-u)^2}(r_{1/2}^2 - \rho^2) \]

Since $F_{\partial B_{r_{1/2}} (p)} \equiv 0$, we can find $x_0$ inside this ball where $F$ is maximized. We can assume that $x_0$ is not a cut point or else we can slightly alter our cut-off function as is done in \cite{SY}. Therefore, we assume that the cut-off function is smooth at this point.
\vspace{.3in}

 At $x_0$, we pick an orthonormal frame so that $u_1 = | \nabla u |$ and $u_i = 0$ for $i \neq 1$. Then, since $x_0$ maximizes $F$ (and $F$ is twice differentiable at $x_0$ by the Schauder estimate), we have that $\nabla F(x_0) = 0$, 
\begin{eqnarray*}
(r_{1/2}^2 - \rho^2) \frac{2u_{1i} u_1}{(\beta -u)^2} - 2 (r_{1/2}^2 - \rho^2) \frac{u_1^2 u_i}{(\beta -u)^3} - 2 \rho \rho_i \frac{ | \nabla u|^2}{(\beta-u)^2} = 0
\end{eqnarray*}

We can simplify this identity to obtain the following identities.
\begin{eqnarray}
 \label{u identities at x_0}
u_{i1} = u_{1i}  = \frac{ \rho \rho_i}{(r_{1/2}^2 - \rho^2)} \textrm{ for $i \neq 1$} \\
u_{11} = \frac{u_1^2}{(\beta -u)}+ \rho \rho_1 \frac{ | \nabla u|}{(r_{1/2}^2 - \rho^2)} \geq \frac{u_1^2}{(\beta -u)} -  \frac{ \rho | \nabla u|}{(r_{1/2}^2 - \rho^2)}\label{u11 identity at x_0}
\end{eqnarray}

We also have the following formula for the Laplacian of $F$.
\begin{eqnarray*}
 (\Delta F)  \frac{(\beta-u)^2}{(r_{1/2}^2 - \rho^2)} + (\nabla F) \nabla \left( \frac{(\beta-u)^2}{(r_{1/2}^2 - \rho^2)} \right) + F \Delta \left(\frac{(\beta-u)^2}{(r_{1/2}^2 - \rho^2)} \right) = \Delta (| \nabla u|^2)
\end{eqnarray*}

We now use this equation to prove the following estimate on $F$.

\begin{lemma}
Let $d$ is the diameter of $M^n$, $K$ a lower bound on the Ricci curvature and $C$ is the bound on the drift.
At the point $x_0$, we have the following estimate on $F$. 
   \begin{eqnarray*}
       0   & \geq & \left( \frac{1}{2(n-1)}  -  \frac{1}{4(n-1)^2} \right) F^2 - (4(n-1)-1) \left( 2+ \frac{1}{(n-1)} \right) 2 \rho F  \\
       & &  - 2 C d^8 F^{3/2} - 2 C d^7 F  -((n-1)K+\lambda)F d^4 \\
  &  & - \frac{2}{n-1} (\lambda u)^2 \frac{ d^8}{(\beta -u)^2} -\frac{2}{n-1} C^2 Fd^6\\
  & &  -F(  \lambda u) \frac{d^6}{(\beta -u)} - F^{3/2} C d^5\\
   &  & -8 F^{3/2} d^4 - 2(n-1)(1+ K \rho)F d^4
\end{eqnarray*}

\end{lemma}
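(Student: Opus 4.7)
The plan is to exploit the critical-point condition $\nabla F(x_0)=0$ together with $\Delta F(x_0)\le 0$, combine them with the Bochner inequality already derived and the Laplacian comparison theorem, and then regroup the resulting terms by their homogeneity in $F$.

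Setting $A:=(\beta-u)^2/(r_{1/2}^2-\rho^2)$ so that $FA = |\nabla u|^2$, the product-rule identity recorded just above the lemma, evaluated at $x_0$, loses its middle term (because $\nabla F(x_0)=0$) and, together with $\Delta F(x_0)\le 0$, yields the key inequality
\[
\Delta(|\nabla u|^2)\;\le\; F\,\Delta A.
\]
The strategy is to lower-bound the left-hand side via Bochner and to upper-bound the right-hand side by explicitly expanding $\Delta A$; rearranging produces an inequality of exactly the form claimed.

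For the left-hand side I substitute the critical-point identities \eqref{u identities at x_0}--\eqref{u11 identity at x_0} into $|\nabla|\nabla u||^2=\sum_j u_{1j}^2$ and into the drift contribution $-2u_i(C|\nabla u|)_i = -2C u_1 u_{11}$. The off-diagonal pieces $u_{1i}^2 = \rho^2\rho_i^2/(r_{1/2}^2-\rho^2)^2$ generate the $\rho F$ error terms, while the dominant diagonal piece $u_{11}^2/(2(n-1))$ from Bochner is, through \eqref{u11 identity at x_0}, comparable to $|\nabla u|^4/(\beta-u)^2$, which unfolds back into $F^2$ and is the source of the leading quadratic term. For the right-hand side I differentiate $A$ twice: the $\Delta((\beta-u)^2)$ contribution produces $-2|\nabla u|^2 - 2(\beta-u)\Delta u$, which via the equation $\Delta u = -C|\nabla u|-\lambda u$ feeds the $C$- and $\lambda$-terms in the statement; the $\Delta(1/(r_{1/2}^2-\rho^2))$ contribution is controlled by the Laplacian comparison theorem $\Delta(\rho^2)\le 2(n-1)(1+K\rho)$ under $\mathrm{Ric}\ge -(n-1)K$, accounting for the final $-2(n-1)(1+K\rho)Fd^4$ term; cross terms are handled by Cauchy--Schwarz and contribute to the $F^{3/2}$ entries.

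The main obstacle is the algebraic bookkeeping in the expansion of $u_{11}^2$. Using the lower bound $u_{11}\ge u_1^2/(\beta-u) - \rho|\nabla u|/(r_{1/2}^2-\rho^2)$ from \eqref{u11 identity at x_0} and expanding the square produces a cross term between $u_1^2/(\beta-u)$ and $\rho|\nabla u|/(r_{1/2}^2-\rho^2)$; absorbing it into the leading $F^2$ piece via a weighted Young inequality with weight $1/(2(n-1))$ yields exactly the stated coefficient $\frac{1}{2(n-1)}-\frac{1}{4(n-1)^2}$, the subtracted $\frac{1}{4(n-1)^2}$ being the price paid for this absorption. Once this is done, every remaining factor of $\rho$, $|\nabla\rho|\le 1$, or $(r_{1/2}^2-\rho^2)^{-1}$ is crudely bounded by the appropriate power of the diameter $d$, and the terms collect into the form stated in the lemma.
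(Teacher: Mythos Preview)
Your proposal is correct and follows essentially the same route as the paper's proof: use $\nabla F(x_0)=0$ and $\Delta F(x_0)\le 0$ to reduce to $\Delta(|\nabla u|^2)\le F\,\Delta A$, lower-bound the left side with the Bochner inequality and the critical-point identities, expand the right side using $\Delta u=-C|\nabla u|-\lambda u$ and the Laplacian comparison bound on $\Delta\rho$, and finally replace all cut-off factors by powers of $d$. One small bookkeeping correction: the leading coefficient $\frac{1}{2(n-1)}-\frac{1}{4(n-1)^2}$ does not come directly from a $u_{11}^2/(2(n-1))$ term in Bochner, but from the combination $(2+\tfrac{1}{n-1})(1-\tfrac{1}{4(n-1)})-2$, where the $-2$ is the cancellation with the $-2|\nabla u|^2$ term you identified in $\Delta((\beta-u)^2)$; your Young-inequality weight accounts for the $\tfrac{1}{4(n-1)}$ loss and the residual is exactly the stated constant.
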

\begin{proof}

The proof of this lemma is a very long string of manipulations combined with the use of the Laplace comparison theorem. We start by noting that at $x_0$, $\Delta F \leq 0$ and $\nabla F = 0$, which allows us to use our previous identities and inequalities.


\begin{eqnarray*}
0 & \geq & \Delta (| \nabla u|^2) - F \Delta \left(\frac{(\beta-u)^2}{(r_{1/2}^2 - \rho^2)} \right) \\
  &\geq &\left( 2+ \frac{1}{(n-1)} \right) | \nabla |\nabla u||^2 - 2 \sum_i u_i (C|\nabla u| )_i -((n-1)K+\lambda)| \nabla u|^2 \\
  &  & - \frac{2}{n-1} \left((C|\nabla u| )^2 + (\lambda u)^2 \right) - F \Delta \left(\frac{(\beta-u)^2}{(r_{1/2}^2 - \rho^2)} \right) \\
   &\geq &\left( 2+ \frac{1}{(n-1)} \right) \left( |u_{11}|^2+ \sum_{ i \neq 1}|u_{i1}|^2 \right)  - 2 u_1 (C|\nabla u| )_1  \\
  &  & -((n-1)K+\lambda)| \nabla u|^2 - \frac{2}{n-1} \left((C|\nabla u| )^2 + (\lambda u)^2 \right) - 2 F \frac{|\nabla u|^2}{(r_{1/2}^2 - \rho^2)}  \\
   & &- 2 F (\beta -u) \frac{\Delta u}{(r_{1/2}^2 - \rho^2)} +8 F (\beta -f) \rho \frac{\nabla u \cdot \nabla \rho}{(r_{1/2}^2 - \rho^2)^2} - F (\beta -u)^2 \Delta (r_{1/2}^2 - \rho^2)^{-1} 
     \end{eqnarray*}
   \begin{eqnarray*}
   &\geq &\left( 2+ \frac{1}{(n-1)} \right) \left[ \left( \frac{|u_{1}|^2}{\beta -u} + 2 \frac{\rho_1 \rho u_1}{(r_{1/2}^2 - \rho^2)} \right)^2+ \sum_{ i \neq 1}|u_{i1}|^2 \right] \\
    & & - 2 u_1 (C|\nabla u| )_1 -((n-1)K+\lambda)| \nabla u|^2 \\
  &  & - \frac{2}{n-1} \left((C|\nabla u| )^2 + (\lambda u)^2 \right) - 2 F \frac{|\nabla u|^2}{(r_{1/2}^2 - \rho^2)} - 2 F (\beta -u) \frac{\Delta u}{(r_{1/2}^2 - \rho^2)} \\
   & &+8 F (\beta -u) \rho \frac{\nabla u \cdot \nabla \rho}{(r_{1/2}^2 - \rho^2)^2} - F (\beta -u)^2 \Delta (r_{1/2}^2 - \rho^2)^{-1}  
   \end{eqnarray*}
   \begin{eqnarray*}
      &\geq &\left( 2+ \frac{1}{(n-1)} \right) \left( 1 -  \frac{1}{4(n-1)} \right) \frac{|u_{1}|^4}{(\beta -u)^2} \\
       & & - (4(n-1)-1) \left( 2+ \frac{1}{(n-1)} \right) \left( 2 \frac{\rho \rho_1  u_1}{(r_{1/2}^2 - \rho^2)} \right)^2 \\
       & & + \left( 2+ \frac{1}{(n-1)} \right) \sum_{ i \neq 1}|u_{i1}|^2  - 2  u_1 (C|\nabla u| )_1 -((n-1)K+\lambda)| \nabla u|^2 \\
  &  & - \frac{2}{n-1} \left((C|\nabla u| )^2 + (\lambda u)^2 \right) - 2 F \frac{|\nabla u|^2}{(r_{1/2}^2 - \rho^2)} - 2 F (\beta -u) \frac{\Delta u}{(r_{1/2}^2 - \rho^2)} \\
   & &+8 F (\beta -u) \rho \frac{\nabla u \cdot \nabla \rho}{(r_{1/2}^2 - \rho^2)^2} - F (\beta -u)^2 \Delta (r_{1/2}^2 - \rho^2)^{-1}  
   \end{eqnarray*}
   
  Recalling that $2 F \frac{|\nabla u|^2}{(r_{1/2}^2 - \rho^2)} = 2 \frac{|u_{1}|^4}{(\beta -u)^2}$, we see that this term partially cancels out the firs term in the previous inequality. Making this substitution and simplifying other terms, we find the following:
\begin{eqnarray*}
0 & \geq & \left( \frac{1}{2(n-1)}  -  \frac{1}{4(n-1)^2} \right) \frac{|u_{1}|^4}{(\beta -u)^2}  \\
& &- (4(n-1)-1) \left( 2+ \frac{1}{(n-1)} \right) \left( 2 \frac{\rho \rho_1 u_1}{(r_{1/2}^2 - \rho^2)}\right)^2 \\
       & & + \left( 2+ \frac{1}{(n-1)} \right) \sum_{ i \neq 1}\frac{ \rho^2 \rho^2_i |u_1|^2}{(r_{1/2}^2 - \rho^2)^2}  - 2  u_1 (C|\nabla u| )_1  -((n-1)K+\lambda)| \nabla u|^2 \\
  &  & - \frac{2}{n-1} \left((C|\nabla u| )^2 + (\lambda u)^2 \right)  - 2 F (\beta -u) \frac{\Delta u}{(r_{1/2}^2 - \rho^2)}  \\
   & &+8 F (\beta -u) \rho \frac{\nabla u \cdot \nabla \rho}{(r_{1/2}^2 - \rho^2)^2}   - F (\beta -u)^2 \Delta (r_{1/2}^2 - \rho^2)^{-1}
\end{eqnarray*}

Substituting in $\Delta u + C|\nabla u| + \lambda u = 0$ into the fourth line and then simplifying, this yields:
\begin{eqnarray*}
0 
   & \geq & \left( \frac{1}{2(n-1)}  -  \frac{1}{4(n-1)^2} \right) \frac{|u_{1}|^4}{(\beta -u)^2} \\ 
   & & - (4(n-1)-1) \left( 2+ \frac{1}{(n-1)} \right)( 2 \frac{\rho u_1}{(r_{1/2}^2 - \rho^2)})^2 \\
       & &  - 2 u_1 (C|\nabla u| )_1 -((n-1)K+\lambda)| \nabla u|^2 \\
  &  & - \frac{2}{n-1} \left((C|\nabla u| )^2 + (\lambda u)^2 \right) - 2 F (\beta -u) \frac{-C|\nabla u|  - \lambda u}{(r_{1/2}^2 - \rho^2)} \\
   & &+8 F (\beta -u) \rho \frac{ u_1 \rho_1}{(r_{1/2}^2 - \rho^2)^2} - F (\beta -u)^2 \Delta (r_{1/2}^2 - \rho^2)^{-1}  
\end{eqnarray*}

Multiplying both sides of the inequality by $\frac{ (r_{1/2}^2 - \rho^2)^4}{(\beta -u)^2}$, and substituting in the definition of $F$ in the last line, we have the following.
\begin{eqnarray*}
0 
      & \geq & \left( \frac{1}{2(n-1)}  -  \frac{1}{4(n-1)^2} \right) F^2 - (4(n-1)-1) \left( 2+ \frac{1}{(n-1)} \right) 2 \rho F  \\
       & &  - 2 \frac{ (r_{1/2}^2 - \rho^2)^4}{(\beta -u)^2} u_1 (C|\nabla u| )_1 -((n-1)K+\lambda)F (r_{1/2}^2 - \rho^2)^2 \\
  &  & - \frac{2}{n-1} \left((C|\nabla u| )^2 + (\lambda u)^2 \right) \frac{ (r_{1/2}^2 - \rho^2)^4}{(\beta -u)^2} +F( C |\nabla u|  + \lambda u) \frac{ (r_{1/2}^2 - \rho^2)^3}{(\beta -u)}\\
   &  & -8 F^{3/2}  \rho(r_{1/2}^2 - \rho^2)^{3/2} - F \Delta (r_{1/2}^2 - \rho^2)^{-1}(r_{1/2}^2 - \rho^2)^4
   \end{eqnarray*}
   
   Using Identity \eqref{u11 identity at x_0} and further simplifying, we find that
   \begin{eqnarray*}
       0   
   & \geq & \left( \frac{1}{2(n-1)}  -  \frac{1}{4(n-1)^2} \right) F^2 - (4(n-1)-1) \left( 2+ \frac{1}{(n-1)} \right) 2 \rho F  \\
       & &  - 2 \frac{ (r_{1/2}^2 - \rho^2)^4}{(\beta -u)^3} u_1^3 C - 2 \frac{ \rho (r_{1/2}^2 - \rho^2)^3}{(\beta -u)^2} u_1^2 C \\
       & & -((n-1)K+\lambda)F (r_{1/2}^2 - \rho^2)^2 \\
  &  & - \frac{2}{n-1} (\lambda u)^2 \frac{ (r_{1/2}^2 - \rho^2)^4}{(\beta -u)^2} -\frac{2}{n-1} C^2 F(r_{1/2}^2 - \rho^2)^3\\
  & &  + \lambda u F  \frac{ (r_{1/2}^2 - \rho^2)^3}{(\beta -u)} + F^{3/2} C  (r_{1/2}^2 - \rho^2)^{5/2}\\
   &  & -8 F^{3/2}  \rho(r_{1/2}^2 - \rho^2)^{3/2} - F \Delta (r_{1/2}^2 - \rho^2)^{-1}(r_{1/2}^2 - \rho^2)^4
\end{eqnarray*}

Therefore, we have the following inequality.
   \begin{eqnarray}  \label{Laplacecomparisonterm}
    0  & \geq & \left( \frac{1}{2(n-1)}  -  \frac{1}{4(n-1)^2} \right) F^2 - (4(n-1)-1) \left( 2+ \frac{1}{(n-1)} \right) 2 \rho F  \\
       & &  - 2 C (r_{1/2}^2 - \rho^2)^4 F^{3/2} - 2 C \rho (r_{1/2}^2 - \rho^2)^3 F  -((n-1)K+\lambda)F (r_{1/2}^2 - \rho^2)^2 \nonumber \\
  &  & - \frac{2}{n-1} (\lambda u)^2 \frac{ (r_{1/2}^2 - \rho^2)^4}{(\beta -u)^2} -\frac{2}{n-1} C^2 F(r_{1/2}^2 - \rho^2)^3 \nonumber \\
  & &  +F(  \lambda u) \frac{ (r_{1/2}^2 - \rho^2)^3}{(\beta -u)} + F^{3/2} C  (r_{1/2}^2 - \rho^2)^{5/2} \nonumber\\
   &  & -8 F^{3/2}  \rho(r_{1/2}^2 - \rho^2)^{3/2} - F \Delta (r_{1/2}^2 - \rho^2)^{-1}(r_{1/2}^2 - \rho^2)^4 \nonumber 
\end{eqnarray}

We now focus our efforts into estimating the final term of Inequality \eqref{Laplacecomparisonterm}. Using the Laplace comparison theorem (as in \cite{SY}), we have the following inequality.
   \begin{eqnarray*} 
    \Delta (r_{1/2}^2 - \rho^2)^{-1} &=& \sum_i  \frac{2 \rho_{ii} \rho}{(r_{1/2}^2 - \rho^2)^2} + \frac{8 \rho^2_{i} \rho^2}{(r_{1/2}^2 - \rho^2)^3}+ \frac{2 \rho^2_{i}}{(r_{1/2}^2 - \rho^2)^2} \\
    & \leq & \frac{n-1}{\rho}(1+ K \rho) \frac{2 \rho}{(r_{1/2}^2 - \rho^2)^2} + \frac{8 \rho^2}{(r_{1/2}^2 - \rho^2)^3}+ \frac{2}{(r_{1/2}^2 - \rho^2)^2} \nonumber
   \end{eqnarray*}

This yields the following estimate on the last term in Inequality \eqref{Laplacecomparisonterm}.
  \begin{eqnarray} \label{Laplace comparison estimate}
  F \Delta (r_{1/2}^2 - \rho^2)^{-1}(r_{1/2}^2 - \rho^2)^4 & \leq &  2(n-1)(1+ K \rho)F (r_{1/2}^2 - \rho^2)^2 + 8F \rho^2(r_{1/2}^2 - \rho^2) \\
  & &+ 2F(r_{1/2}^2 - \rho^2)^2  \nonumber
     \end{eqnarray}

Combining Inequalities \eqref{Laplacecomparisonterm} and \eqref{Laplace comparison estimate}, we find the following.
      \begin{eqnarray} \label{before using diameter bound}
       0      & \geq & \left( \frac{1}{2(n-1)}  -  \frac{1}{4(n-1)^2} \right) F^2 - (4(n-1)-1) \left( 2+ \frac{1}{(n-1)} \right) 2 \rho F  \\
       & &  - 2 C (r_{1/2}^2 - \rho^2)^4 F^{3/2} - 2 C \rho (r_{1/2}^2 - \rho^2)^3 F  -((n-1)K+\lambda)F (r_{1/2}^2 - \rho^2)^2 \nonumber \\
  &  & - \frac{2}{n-1} (\lambda u)^2 \frac{ (r_{1/2}^2 - \rho^2)^4}{(\beta -u)^2} -\frac{2}{n-1} C^2 F(r_{1/2}^2 - \rho^2)^3 \nonumber \\
  & &  +F(  \lambda u) \frac{ (r_{1/2}^2 - \rho^2)^3}{(\beta -u)} + F^{3/2} C  (r_{1/2}^2 - \rho^2)^{5/2} \nonumber \\
   &  & -8 F^{3/2}  \rho(r_{1/2}^2 - \rho^2)^{3/2} \nonumber \\
   & & - 2(n-1)(1+ K \rho)F (r_{1/2}^2 - \rho^2)^2 + 8F \rho^2(r_{1/2}^2 - \rho^2)+ 2F(r_{1/2}^2 - \rho^2)^2 \nonumber
\end{eqnarray}

Denoting the diameter of $M$ by $d$, we have that $r_{1/2}$ and  $\rho < d $. As a result, $(r_{1/2}^2 - \rho^2) < d^2$. Substituting these into Inequality \eqref{before using diameter bound} and dropping the final two positive terms, we find the desired inequality.
  \begin{eqnarray*}
       0      & \geq & \left( \frac{1}{2(n-1)}  -  \frac{1}{4(n-1)^2} \right) F^2 - (4(n-1)-1) \left( 2+ \frac{1}{(n-1)} \right) 2 \rho F  \\
       & &  - 2 C d^8 F^{3/2} - 2 C d^7 F  -((n-1)K+\lambda)F d^4 \\
  &  & - \frac{2}{n-1} (\lambda u)^2 \frac{ d^8}{(\beta -u)^2} -\frac{2}{n-1} C^2 Fd^6\\
  & &  -F(  \lambda u) \frac{d^6}{(\beta -u)} - F^{3/2} C d^5\\
   &  & -8 F^{3/2} d^4 - 2(n-1)(1+ K \rho)F d^4
\end{eqnarray*}
\end{proof}


\subsection{Using the inequality on $F$}

At this point, we take stock of the situation to show how this gives any hope of providing a lower bound on $\lambda$.
We have uniform control of the cutoff function in $B_{r_{3/4}}$ from the a priori gradient estimate on $u$. This allows us to change the inequality on $F$ to obtain to an inequality on $\dfrac{|\nabla   u|}{(\beta-  u)}$. If we integrate out this inequality along a geodesic from $p$ to some $x$ satisfying $ u(x)=\frac34$, we obtain a bound of the form 
\[\log\left( \frac{ \beta - 3/4}{\beta-1} \right) \leq \left( \mathcal{C} + c\frac{\lambda^{1/2}}{(\beta - 1)^{1/2}} \right), \]
where $\mathcal{C}$ and $c$ and constants to be determined.

For $\beta$ close to 1, the left hand side blows up, which shows that right hand side must blow up as well and implies a lower bound on $\lambda$.
It requires some care to to make this precise. In particular, as $\beta$ goes to 1, our $C^{3, \alpha}$ control on $u$ weakens. Therefore, we set $\beta - 1$ at some small but fixed scale. In order for this to work, it is crucial that the constants $\mathcal{C}$ and $c$ only depend on the a priori bounds, so are independent of $\beta$. This allows us to maintain enough control to use the Bochner identity and the maximum principle while still being free to pick $\beta$ in a way that yields a positive bound on $\lambda$. 

We now do this precisely. For convenience, we denote $f := \sqrt{ F}$ and rewrite the previous inequality in terms of $f$.
      \begin{eqnarray*}
       0      & \geq & \left( \frac{1}{2(n-1)}  -  \frac{1}{4(n-1)^2} \right) f^4 - (4(n-1)-1) \left( 2+ \frac{1}{(n-1)} \right) 2 d f^2  \\
       & &  - 2 C d^8 f^3 - 2 C d^7 f^2  -((n-1)K+\lambda) d^4 f^2 \\
  &  & - \frac{2}{n-1} (\lambda u)^2 \frac{ d^8}{(\beta -u)^2} -\frac{2}{n-1} C^2 f^2 d^6\\
  & &  -f^2(  \lambda u) \frac{d^6}{(\beta -u)} - f^3 C d^5\\
   &  & -8 f^3 d^4 - 2(n-1)(1+ K \rho)f^2 d^4 
\end{eqnarray*}

For conciseness, we denote $\alpha = \frac{1}{\beta-1}$ and observe that $\alpha > \frac{u}{\beta-u}$. We also define the following constants:
 $$A= \left( \frac{1}{2(n-1)}  -  \frac{1}{4(n-1)^2} \right),$$ $$B =2 C d^8+ C d^5+8 d^4,$$
 $$\mathcal{ D} =   \lambda \frac{d^6}{(\beta -1)},$$ 
\begin{eqnarray*}
D &= &  (4(n-1)-1) \left( 2+ \frac{1}{(n-1)} \right) 2 d + 2 C d^7+ ((n-1)K+\lambda) d^4 \\
 & &+ \frac{2}{n-1} C^2 d^6 + 2(n-1)(1+ K \rho) d^4
\end{eqnarray*}
 $$\mathcal{E} =  \frac{2\lambda^2}{n-1}  \frac{ d^8}{(\beta -1)^2}$$

From the previous inequality, we have the following estimate:
      \begin{eqnarray*}
       0      & \geq & A f^4 - B f^3 - D f^2 - \mathcal{D} f^2 - \mathcal{E} 
\end{eqnarray*}

Note that the calligraphic terms are the only terms where the coefficients aren't uniform in $\beta$ and these both contain a $\lambda$.
Now we use a lemma about the roots of quartics. This lemma was originally proven in \cite{GK}.

  \begin{lemma}\label{Roots of quartics}
Suppose $A_1,A_2, A_3  >0$ and x satisfies $P(x) = x^4 - A_1x^3 -A_2x^2 - A_3 \leq 0$.
 Then $x \leq A_1 + \sqrt{A_2 +\sqrt{A_3}}$.
\end{lemma}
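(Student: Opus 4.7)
The plan is to prove the contrapositive: if $x > A_1 + \sqrt{A_2 + \sqrt{A_3}}$, then $P(x) > 0$. Since $A_1, A_2, A_3 > 0$ force the bound $A_1 + \sqrt{A_2 + \sqrt{A_3}}$ to be positive, any $x \leq 0$ automatically satisfies the conclusion, so I only need to handle $x > 0$.

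The key substitution is $y := x - A_1$; the hypothesis $x > A_1 + \sqrt{A_2 + \sqrt{A_3}}$ is exactly $y > \sqrt{A_2 + \sqrt{A_3}} > 0$, which in turn gives the two useful facts $y^2 > A_2 + \sqrt{A_3}$ and $y > A_3^{1/4}$. Using $x^4 - A_1 x^3 = x^3 y$, the polynomial becomes $P(x) = x^3 y - A_2 x^2 - A_3$, so it suffices to show $x^3 y > A_2 x^2 + A_3$.

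Writing $x^3 y = x^2(xy) = x^2(y^2 + A_1 y)$ and applying $y^2 > A_2 + \sqrt{A_3}$ yields
\[
x^3 y > x^2 A_2 + x^2 \sqrt{A_3} + A_1 y\, x^2 \geq A_2 x^2 + x^2 \sqrt{A_3}.
\]
Since $y > A_3^{1/4}$ and $A_1 > 0$, we also have $x > A_3^{1/4}$, hence $x^2 > \sqrt{A_3}$ and consequently $x^2 \sqrt{A_3} > A_3$. Chaining these gives $x^3 y > A_2 x^2 + A_3$, i.e.\ $P(x) > 0$, which is the contrapositive we needed. I do not anticipate any real obstacle here: the argument is elementary and self-contained, and the only minor care needed is to verify the sign conditions so that the positive term $A_1 y\, x^2$ can be dropped and the strict inequalities chain correctly.
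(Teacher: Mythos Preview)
Your proof is correct and follows essentially the same route as the paper's (commented-out) argument: both rely on the factorization $P(x) = x^2\bigl(x^2 - A_1 x - A_2\bigr) - A_3$ (which in your notation is $x^2(y^2 + A_1 y - A_2) - A_3$), bound the inner quadratic factor from below by $\sqrt{A_3}$ plus a nonnegative term, and finish with $x^2 > \sqrt{A_3}$. The only cosmetic difference is that the paper evaluates $P$ at the threshold $a = A_1 + \sqrt{A_2 + \sqrt{A_3}}$ and then invokes monotonicity of $x^2(x^2 - A_1 x - A_2)$, whereas you work directly with an arbitrary $x > a$ via the substitution $y = x - A_1$; the inequalities used are identical.
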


 In order to make future calculations more feasible, we note that the following inequality holds:
$$A_1 + \sqrt{A_2 +\sqrt{A_3}}  \leq A_1 +  (2A_2)^{1/2} +(4A_3)^{1/4}$$

Applying this inequality to $f$, this shows that
\begin{eqnarray*} 
f &  \leq & \frac{1}{A} \left(B + \sqrt{2} \sqrt{ D + \mathcal{D}} + \sqrt{2} \mathcal{E}^{1/4} \right) \\
 &  \leq & \frac{1}{A} \left(B + 2 \sqrt{ D} \right) +  \frac{1}{A} \left( 2\sqrt{\mathcal{D}} +  \sqrt{2} \mathcal{E}^{1/4}   \right)
\end{eqnarray*} 

Using the fact that $n \geq 2$, we obtain the following simplified estimates.
\begin{eqnarray*} 
f  &  \leq & \frac{1}{A} \left(B + 2 \sqrt{ D} \right) +  8(n-1) \left( d^3 +  d^2  \right) \frac{\sqrt{\lambda}}{\sqrt{\beta -1}}
\end{eqnarray*} 

From the $C^{1,\alpha}$ estimate, we have $ r_{1/2} \geq r_{3/4} + d_{1/4}$ and so in $B_{r_{3/4}},$ 
\begin{eqnarray*} 
(r_{1/2}^2 - \rho^2)  \geq   (r_{1/2}^2 - r_{3/4}^2) \geq 3 d^2_{1/4}.
\end{eqnarray*}

Using the definition of $f$, this implies that in $B_{r_{3/4}}$, the following estimate holds:
\begin{eqnarray*} 
\frac{|\nabla u|}{\beta - u} \geq   \frac{1}{3 A  d^2_{1/4}} \left(B + 2 \sqrt{ D} \right) +  \frac{8(n-1)}{3d^2_{1/4}} \left( d^3 +  d^2  \right) \frac{\sqrt{\lambda}}{\sqrt{\beta -1}}
\end{eqnarray*}

Setting $\mathcal{C}= \frac{1}{3 A  d^2_{1/4}} \left(B + 2 \sqrt{ D} \right) $ and $c=  \frac{8(n-1)}{3d^2_{1/4}} \left( d^3 +  d^2  \right)$, this shows that
\begin{eqnarray*} 
\frac{|\nabla u|}{\beta - u} \leq   \mathcal{C} +  c \frac{\sqrt{\lambda}}{\sqrt{\beta -1}}.
\end{eqnarray*}

\subsection{A lower bound on $\lambda$}

We pick $x \in B_{r_{3/4}}$ with $u(x) = \frac{3}{4}$ and a minimal geodesic $\gamma$ between $x$ and $p$ (recall that $p$ is the point where $u(p)=1$). This integral is well defined because $u$ has enough continuity for $\nabla u$ to be defined pointwise.

We can estimate this integral in the following way.
\begin{eqnarray*} 
\log \frac{\beta - 3/4}{\beta - 1} \leq \int_{\gamma} \frac{|\nabla u|}{\beta - u} \leq  \left( \mathcal{C} +  c \frac{\sqrt{\lambda}}{\sqrt{\beta -1}} \right)   d
\end{eqnarray*}

Solving for $\lambda$, we have the following inequality.
\begin{equation} \label{semifinal inequality}
    \sqrt{\lambda} \geq  \frac{\sqrt{(\beta - 1)}}{c}\left( \frac{1}{d}  \log \frac{\beta - 3/4}{\beta - 1} -\mathcal{C} \right)
\end{equation}

To find a lower bound for $\lambda$, we must find a particular value for $\beta$ so that the right hand side of Inequality \eqref{semifinal inequality} is positive. To do so, we let $x =   \frac{\beta - 3/4}{\beta - 1}$ and set $x = e^{dC+d}$. This then shows the following, which finishes the proof the theorem.
\begin{equation} \label{final inequality}
    \lambda \geq \frac{1}{4c^2} (e^{d\mathcal{C}+d}-1)^{-1}
\end{equation}

\section{A concrete example}

The preceding argument establishes a lower bound on the real eigenvalues, but it does not make clear how the drift affects the spectrum. To illustrate how the size of the drift affects small eigenvalues, we calculate the minimal eigenvalue explicitly in a simple case.
Consider the circle $\mathbb{R}/4\mathbb{Z}$ and the problem 
\[ u^{\prime \prime} + f \cdot u^{ \prime} + \lambda u =0 \textrm{ with } \| f \|_{L^{\infty}} \leq C \]

To simplify the calculation, we set $C= 2b$. The goal is to minimize $\lambda$ under the constraint that $\| f \|_{\infty} \leq 2b$. By symmetry, we can instead consider the principle eigenvalue of the Dirichlet problem on the domain $[-1,1]$. Using symmetry and the drift ansatz, $u$ will satisfying the following ordinary differential equation: 
\begin{equation} \label{ODEeigenvalue}
    u^{\prime \prime} + 2b \cdot u^{ \prime} + \lambda u =0
\end{equation}
on the domain $[0,1]$ with the constraints:
\begin{enumerate}
\item $u(0)=1$,
\item $u^\prime(0) = 0$,
\item $u(1) = 0$, and
\item $\lambda$ is the minimal value so that Equation \ref{ODEeigenvalue} has a solution.
\end{enumerate}


This is a linear ODE, so we can solve it explicitly and then solve for $\lambda$ in terms of $b$ so that the boundary conditions are satisfied. Doing so, we find the following. 
\begin{enumerate}
    \item   For $b$ large, the principle eigenvalue satisfies the following equation:
\[ \frac{1+ e^{-2 \sqrt{b^2 -\lambda}}}{1- e^{-2 \sqrt{b^2 -\lambda}}} \sqrt{1-\lambda/b^2} =1 \]
\item  For $b$ small, $\lambda $ satisfies the equation:
\[ \sqrt{ \lambda - b^2} = b \tan \left( \sqrt{ \lambda - b^2} \right) \]
\end{enumerate}

Note that the first expression decreases rapidly as $b$ gets large. More precisely, the minimal eigenvalue satisfies the asymptotic
\[\lambda \approx 4b^2 e^{-2b} = C^2 e^{-C}. \]

\begin{center}
\includegraphics[width=120mm,scale=1]{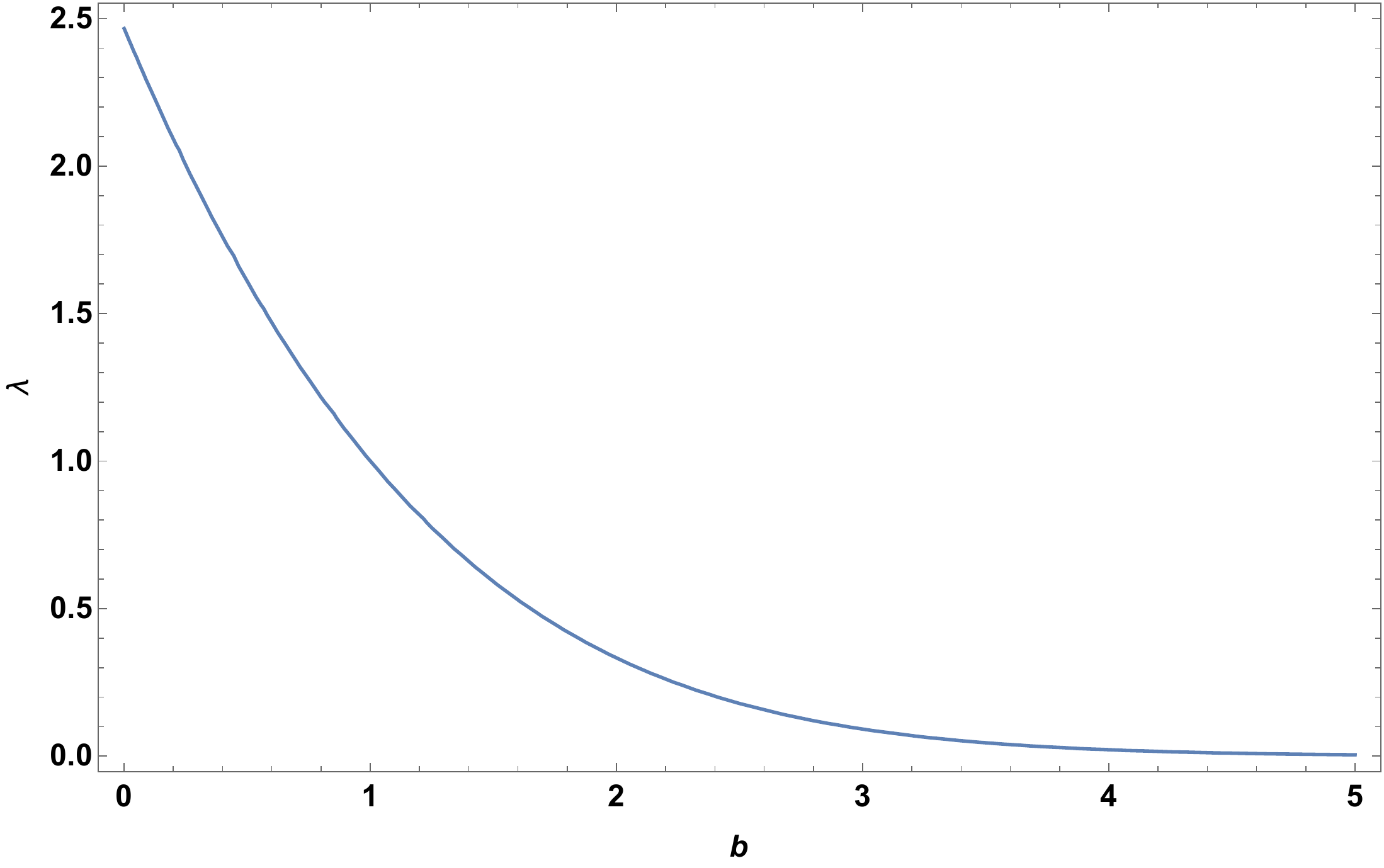}\\
A graph of $\lambda$ in terms of $b$
\end{center}


\section{Open Questions}
\label{Future work}
In this section, we discuss two further questions which arose while writing this manuscript.

\subsection{$L^p$ drifts}

One natural question is whether a similar result can be shown when the drift has $L^p$ bounds for $p < \infty$. For this question, it is necessary to assume that $p >n$. Firstly, we need this assumption in order to apply the Calder\'on-Zygmund estimates. More importantly, on the interval (i.e. $n=1$), it is possible to find $L^1$ drifts with arbitrarily small principle eigenvalue. Interestingly, when the drift is sufficiently small in $L^1$ norm, it is possible to recover an estimate on the eigenvalue by applying Gr\"onwall's inequality. As such, the minimal eigenvalue displays interesting threshold phenomena; it is positive for small drifts but as soon as the $L^1$ norm of the drift is sufficiently large, it can be arbitrarily small. A similar phenomena likely occurs when $p=n$ in higher dimensions as well, but there is no analog of the Gr\"onwall inequality to prove this.

For $n<p< \infty$, the main obstruction to repeating Theorem \ref{Main theorem} is the lack of a drift ansatz. It is possible to find a sequence of minimizing drifts, but the minimizer does not have any natural additional regularity. In particular, the corresponding eigenfunction is not a subsolution to a semi-linear equation independent of the choice of drift, which was the key idea that we used in the $L^\infty$ case.

One possible approach to this problem for domains in $\mathbb{R}^n$ is to convolve the eigenfunction with a suitable bump function. Using the $C^{1,\alpha}$ estimate and the increased regularity from convolution, it might be possible to prove that the convolved eigenfunction is a subsolution to an equation with more regular drift. If so, it might be possible to adapt our approach to obtain lower bounds in this setting.


\subsection{Relaxing the geometric control}

 In the statement of Theorem \ref{Main theorem}, there is a somewhat awkward two-sided bound on the Ricci curvature. Ideally, this could be replaced with a lower bound on the Ricci curvature alone. We expect this is the case as the only place such bounds are used is to bound the $C^{1,\alpha}$ harmonic radius from below. 
 
 Going further, it may also be possible to remove the dependence on the injectivity radius. The original Li-Yau estimate does not involve the injectivity radius and intuitively speaking, shrinking the injectivity radius would seem to increase, not decrease the eigenvalues. In order to prove an estimate along these lines with no assumptions on the injectivity radius, one would need to find a different way to prove the a priori regularity. As such, we put for the following conjecture.
 
 \begin{conjecture}
Let $(M^n, g)$ be a compact Riemannian manifold satisifying $Ric(M) > K$ and  $v$ be a one-form with $\| v \|_\infty < C$. Suppose that there exists $u \in W^{2,p} (M)$ satisfying $\Delta u + v(\nabla u) = \lambda u$ with $\lambda$ real. Then there exists some constant $\delta>0$ depending only on $K,~ C,~ diam(M),~inj(M)$ and $n$ so that $\lambda > \delta$.
\end{conjecture}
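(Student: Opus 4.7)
The plan is to adapt the proof of Theorem \ref{Main theorem} by replacing the single step that uses a two-sided Ricci bound---the Calder\'on-Zygmund $W^{2,p}$ estimate of G\"uneysu-Pigola---with a coordinate-chart argument that needs only a lower Ricci bound. Every other step of the original proof (the Hamel-Nadirashvili-Russ ansatz, the Schauder bootstrap, the Bochner identity, the Laplace comparison, and the Harnack integration along a geodesic) already uses only a lower Ricci bound, so the overall architecture is preserved.

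The key observation is that in harmonic coordinates, where $\Delta x^k = 0$, the Laplace-Beltrami operator reduces to $\Delta u = g^{ij} \partial_i \partial_j u$ with no first-order Christoffel contribution, since the harmonicity condition is precisely $g^{ij} \Gamma^k_{ij} = 0$. Under the conjecture's hypotheses, the Anderson-Cheeger theory \cite{AC} provides a finite atlas of harmonic charts with $C^\alpha$ precision close to $1$, of uniform size, and of bounded intersection multiplicity, depending only on $K$, $\mathrm{diam}(M)$, $\mathrm{inj}(M)$, and $n$ (the volume lower bound needed by Anderson-Cheeger is supplied by Bishop-Gromov). The first step is then to write the eigenvalue equation in each chart as
\[
g^{ij}(x) \partial_i \partial_j u + b^k(x) \partial_k u = \lambda u,
\]
which is uniformly elliptic with $C^\alpha$ leading coefficients and $L^\infty$ lower-order terms ($b^k$ controlled by $\|v\|_\infty$ and the chart precision), and to apply the standard interior $W^{2,p}$ estimate for continuous leading coefficients (Theorem 9.11 of \cite{GT}). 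Normalizing $\sup u = 1$ and using Bishop-Gromov to control $\|u\|_{L^p}$, this yields a uniform $W^{2,p}$ bound on $u$ in each chart for every $p < \infty$; taking $p > n$ and invoking Morrey gives a uniform $C^{1,\alpha}$ coordinate bound.

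The second step is to convert these coordinate bounds into an intrinsic Lipschitz estimate on $u$: the norm $|du|_g^2 = g^{ij} \partial_i u \partial_j u$ is uniformly bounded since the precision estimate controls $g^{ij}$ and the previous step controls $\partial_i u$. This yields a uniform Lipschitz constant for $u$ on $M$ in terms of the permitted data, which is exactly what is needed for the estimate $r_c \geq d_{1-c}$ of Section 3.3.1. With this Lipschitz bound, the Hamel-Nadirashvili-Russ ansatz of Section 3.2 applies verbatim: one reduces to a smooth subdomain $M^+_\epsilon$ containing $\{u > 0\}$ (which is a proper subset of $M$ whenever $\lambda > 0$, by the strong maximum principle applied to $u$ and $-u$), identifies the minimizing drift as $C \nabla u / |\nabla u|$, and obtains the semilinear equation $\Delta u + C|\nabla u| + \lambda u = 0$. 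Away from the zero locus of $\nabla u$ the drift is $C^\alpha$, and the Schauder estimate in harmonic coordinates bootstraps $u$ to $C^{3,\alpha}$ locally, enough for Bochner. The Bochner/Li-Yau calculation of Section 3.4 then goes through unchanged because it uses Ricci only through $\mathrm{Ric}(\nabla u, \nabla u) \geq -(n-1)K |\nabla u|^2$ and the Laplace comparison $\Delta \rho \leq (n-1)\sqrt{K} \coth(\sqrt{K}\rho)$, both of which depend only on the lower Ricci bound; the geodesic integration and optimization over $\beta$ in Section 3.6 produce the same lower bound on $\lambda$, now with constants depending only on $K$, $C$, $\mathrm{diam}(M)$, $\mathrm{inj}(M)$, and $n$.

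The main obstacle is the patching issue in the second step. With only $C^\alpha$ harmonic coordinates, the chart transition maps are merely H\"older, so the coordinate quantity $\partial_i u$ does not transform as a tensor across charts; one cannot recover an intrinsic $C^{1,\alpha}$ bound on $u$ in the strong sense used in the original proof. The remedy is to work throughout with the intrinsic scalar $|du|_g$ and verify that its uniform pointwise control---plus the $C^\alpha$ regularity of $\nabla u / |\nabla u|$ on the open set where it is nonzero---is enough for each downstream use: the Lipschitz inequality $r_c \geq d_{1-c}$, the ansatz step, and the local Schauder bootstrap. A secondary technical burden is to carefully track that the constants produced by Gilbarg-Trudinger's $W^{2,p}$ estimate and the Anderson-Cheeger covering depend only on the permitted quantities, with no hidden dependence on an upper Ricci bound.
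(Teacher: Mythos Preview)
This statement is presented in the paper as an open \emph{conjecture} (Section~5.2), not a theorem; the paper offers no proof to compare against. The author explicitly identifies the obstruction: with only a lower Ricci bound, Anderson--Cheeger yields merely $C^\alpha$ harmonic coordinates, and ``with only $C^\alpha$ control of $g$ in the coordinate charts, we do not have control of the lower order terms in the Laplace--Beltrami operator.''

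Your proposal attacks precisely this obstruction with an observation the paper does not exploit: in harmonic coordinates $g^{ij}\Gamma^k_{ij}=-\Delta x^k=0$, so $\Delta u = g^{ij}\partial_i\partial_j u$ and the troublesome lower-order terms simply vanish. This is correct and makes the coordinate form of the eigenvalue equation uniformly elliptic with $C^\alpha$ principal part and $L^\infty$ drift, so Theorem~9.11 of \cite{GT} applies. Since the paper's Li--Yau/Harnack argument uses the a priori estimate only through the intrinsic Lipschitz bound $|du|_g\le C_4$ (the $C^{3,\alpha}$ bootstrap in Section~3.3.2 is explicitly qualitative and may rely on the underlying smoothness of $g$), and since $|du|_g^2=g^{ij}\partial_i u\,\partial_j u$ is controlled by the coordinate $W^{2,p}$ bound plus the precision of $g^{ij}$, your outline is a plausible route to a proof the paper does not supply.

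Your self-identified ``main obstacle'' is overstated. On a smooth manifold the harmonic coordinate functions are smooth, so transition maps are smooth; even at the level of uniform estimates, a transition function $y^k$ satisfies $g^{\ell m}(x)\partial_\ell\partial_m y^k=0$ in the overlapping $x$-chart (again because the Christoffel contraction vanishes in harmonic coordinates), and Schauder with $C^\alpha$ leading coefficients gives uniform $C^{2,\alpha}$ control on the transitions. What remains to turn your sketch into a proof is bookkeeping: verifying that the constants from \cite{GT} Theorem~9.11 and the Anderson--Cheeger covering depend only on the permitted data, handling the passage to the minimizing eigenfunction on $M^+_\epsilon$ (the Lipschitz bound is only needed on the interior ball $B_{r_{1/2}}(p)$, so interior estimates suffice), and noting that the qualitative $C^{3,\alpha}$ bootstrap near $x_0$ may appeal to the smoothness of the fixed metric rather than the uniform $C^\alpha$ bound---which is consistent with how the paper already treats that step.
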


\bibliography{bibfile}

\begin{thebibliography}{HNR11}

\bibitem[AC92]{AC}
Michael~T Anderson and Jeff Cheeger.
\newblock $ c^\alpha$-compactness for manifolds with {R}icci curvature and
  injectivity radius bounded below.
\newblock {\em Journal of Differential Geometry}, 35(2):265--281, 1992.

\bibitem[AN12]{andrews2012eigenvalue}
Ben Andrews and Lei Ni.
\newblock Eigenvalue comparison on {B}akry-{E}mery manifolds.
\newblock {\em Communications in Partial Differential Equations},
  37(11):2081--2092, 2012.

\bibitem[B{\'E}85]{BakryEmery}
Dominique Bakry and Michel {\'E}mery.
\newblock Diffusions hypercontractives.
\newblock In {\em {S}{\'e}minaire de {P}robabilit{\'e}s XIX 1983/84}, pages
  177--206. Springer, 1985.

\bibitem[BNV94]{BNV}
Henri Berestycki, Louis Nirenberg, and SR~Srinivasa Varadhan.
\newblock The principal eigenvalue and maximum principle for second-order
  elliptic operators in general domains.
\newblock {\em Communications on Pure and Applied Mathematics}, 47(1):47--92,
  1994.

\bibitem[Can14]{YC}
Yaiza Canzani.
\newblock Analysis on manifolds via the {L}aplacian.
\newblock Lecture Notes available at:
  http://canzani.web.unc.edu/files/2016/08/Laplacian.pdf, 2014.

\bibitem[Che69]{cheeger1969lower}
Jeff Cheeger.
\newblock A lower bound for the smallest eigenvalue of the {L}aplacian.
\newblock In {\em Proceedings of the Princeton conference in honor of Professor
  S. Bochner}, pages 195--199, 1969.

\bibitem[CLR15]{charalambous2015eigenvalue}
Nelia Charalambous, Zhiqin Lu, and Julie Rowlett.
\newblock Eigenvalue estimates on {B}akry--\'{E}mery manifolds.
\newblock In {\em Elliptic and parabolic equations}, pages 45--61. Springer,
  2015.

\bibitem[Cou96]{Coulhon}
Thierry Coulhon.
\newblock {E}spaces de {L}ipschitz et in{\'e}galit{\'e}s de {P}oincar{\'e}.
\newblock {\em Journal of Functional Analysis}, 136(1):81--113, 1996.

\bibitem[CSS15]{colbois2015eigenvalues}
Bruno Colbois, Ahmad~El Soufi, and Alessandro Savo.
\newblock Eigenvalues of the {L}aplacian on a compact manifold with density.
\newblock {\em Communications in Analysis and Geometry}, 23(3):639--670, 2015.

\bibitem[FLL13]{futaki2013first}
Akito Futaki, Haizhong Li, and Xiang-Dong Li.
\newblock On the first eigenvalue of the witten--laplacian and the diameter of
  compact shrinking solitons.
\newblock {\em Annals of Global Analysis and Geometry}, 44(2):105--114, 2013.

\bibitem[FS17]{FS}
Ana~Cristina Ferreira and Isabel Salavessa.
\newblock Dirichlet principal eigenvalue comparison theorems in geometry with
  torsion.
\newblock {\em Journal of Mathematical Analysis and Applications},
  453(2):700--723, 2017.

\bibitem[GP15]{GP}
Batu G{\"u}neysu and Stefano Pigola.
\newblock The {C}alder{\'o}n--{Z}ygmund inequality and {S}obolev spaces on
  noncompact {R}iemannian manifolds.
\newblock {\em Advances in Mathematics}, 281:353--393, 2015.

\bibitem[GT83]{GT}
David Gilbarg and Neil~S Trudinger.
\newblock {\em Elliptic {P}artial {D}ifferential {E}quations of {S}econd
  {O}rder}.
\newblock Springer, 1983.

\bibitem[HNR05]{HNR}
Fran{\c{c}}ois Hamel, Nikolai Nadirashvili, and Emmanuel Russ.
\newblock An isoperimetric inequality for the principal eigenvalue of the
  {L}aplacian with drift.
\newblock {\em Comptes Rendus Mathematique}, 340(5):347--352, 2005.

\bibitem[HNR11]{HNR2}
Fran{\c{c}}ois Hamel, Nikolai Nadirashvili, and Emmanuel Russ.
\newblock Rearrangement inequalities and applications to isoperimetric problems
  for eigenvalues.
\newblock {\em Annals of Mathematics}, pages 647--755, 2011.

\bibitem[Kac66]{Kac}
Mark Kac.
\newblock Can one hear the shape of a drum?
\newblock {\em The American Mathematical Monthly}, 73(4P2):1--23, 1966.

\bibitem[Kha16]{GK}
Gabriel Khan.
\newblock Eigenvalues of the complex {L}aplacian on compact non-{K}\"ahler
  manifolds.
\newblock {\em Annals of Global Analysis and Geometry}, pages 1--17, 2016.

\bibitem[Lot03]{lott2003some}
John Lott.
\newblock Some geometric properties of the {B}akry-\'{E}mery-{R}icci tensor.
\newblock {\em Commentarii Mathematici Helvetici}, 78(4):865--883, 2003.

\bibitem[LV09]{lott2009ricci}
John Lott and C{\'e}dric Villani.
\newblock Ricci curvature for metric-measure spaces via optimal transport.
\newblock {\em Annals of Mathematics}, pages 903--991, 2009.

\bibitem[LY86]{LY}
Peter Li and Shing-{T}ung Yau.
\newblock On the parabolic kernel of the {S}chr{\"o}dinger operator.
\newblock {\em Acta Mathematica}, 156(1):153--201, 1986.

\bibitem[Mil64]{milnor1964eigenvalues}
John Milnor.
\newblock Eigenvalues of the {L}aplace operator on certain manifolds.
\newblock {\em Proceedings of the National Academy of Sciences of the United
  States of America}, 51(4):542, 1964.

\bibitem[Mir13]{mirzakhani2013growth}
Maryam Mirzakhani.
\newblock Growth of {W}eil-{P}etersson volumes and random hyperbolic surface of
  large genus.
\newblock {\em Journal of Differential Geometry}, 94(2):267--300, 2013.

\bibitem[Nas58]{nash1958continuity}
John Nash.
\newblock Continuity of solutions of parabolic and elliptic equations.
\newblock {\em American Journal of Mathematics}, 80(4):931--954, 1958.

\bibitem[Per02]{Perelman}
Grisha Perelman.
\newblock The entropy formula for the {R}icci flow and its geometric
  applications.
\newblock {\em arXiv preprint math/0211159}, 2002.

\bibitem[Ros97]{Rosenberg}
Steven Rosenberg.
\newblock {\em The Laplacian on a {R}iemannian {M}anifold: an {I}ntroduction to
  {A}nalysis on {M}anifolds}, volume~31.
\newblock Cambridge University Press, 1997.

\bibitem[SY94]{SY}
Richard Schoen and Shing-Tung Yau.
\newblock {\em Lectures on differential geometry}, volume~1 of {\em
  {C}onference {P}roceedings and {L}ecture {N}otes in {G}eometry and
  {T}opology}.
\newblock International Press, 1994.

\bibitem[Wit82]{EW}
Edward Witten.
\newblock Supersymmetry and {M}orse theory.
\newblock {\em Journal of Differential Geometry}, 17(4):661--691, 1982.

\bibitem[WW09]{wei2009comparison}
Guofang Wei and William Wylie.
\newblock Comparison geometry for the {B}akry-{E}mery {R}icci tensor.
\newblock {\em Journal of differential geometry}, 83(2):377--406, 2009.

\bibitem[ZY84]{ZY}
Jia-Qiang Zhong and Hong-Cang Yang.
\newblock On the estimate of the first eigenvalue of a compact {R}iemannian
  manifold.
\newblock {\em Science in China Series A-Mathematics, Physics, Astronomy \&
  Technological Science}, 27(12):1265--1273, 1984.

\end{thebibliography}
\bibliographystyle{alpha}


\end{document}